\newtheorem{lemma}{Lemma}[section]
\newtheorem{theorem}[lemma]{Theorem}
\theoremstyle{definition}
\newtheorem{remark}[lemma]{Remark}
\newtheorem{conjecture}[lemma]{Conjecture}
\newtheorem{question}{Question}
\renewcommand{\leq}{\leqslant}
\renewcommand{\geq}{\geqslant}
\newcommand{\eps}{\epsilon}
\def\SO{\text{SO}}
\def\F{\mathbf F}
\def\C{\text{C}}
\newcommand{\SL}{\mathrm{SL}}
\newcommand{\SU}{\mathrm{SU}}
\newcommand{\PGL}{\mathrm{PGL}}
\newcommand{\Aut}{\mathrm{Aut}}
\newcommand{\Out}{\mathrm{Out}}
\newcommand{\SOr}{\mathrm{SO}}
\newcommand{\Sp}{\mathrm{Sp}}
\newcommand{\POmega}{\mathrm{P\Omega}}
\newcommand{\PSL}{\mathrm{PSL}}
\newcommand{\PSU}{\mathrm{PSU}}
\newcommand{\PSp}{\mathrm{PSp}}
\newcommand{\PGammaL}{\mathrm{P\Gamma L}}
\newcommand{\PGammaU}{\mathrm{P\Gamma U}}
\newcommand{\Soc}{\mathrm{Soc}}
\newcommand{\daniele}[1]{\todo[color=green!40]{Daniele: #1}{}}
\begin{document}

\title{On the number of conjugacy classes of a primitive permutation group with nonabelian socle}

\author{Daniele Garzoni}
\address{Daniele Garzoni, Dipartimento di Matematica ``Tullio Levi-Civita'', Universit\`a degli Studi di Padova, Padova, Italy}
\email{daniele.garzoni@phd.unipd.it}

\author{Nick Gill}
\address{Nick Gill, Department of Mathematics, University of South Wales, Treforest CF37 1DL, U.K.}
\email{nick.gill@southwales.ac.uk}

\maketitle

\begin{abstract}
    Let $G$ be a primitive permutation group of degree $n$ with nonabelian socle, and let $k(G)$ be the number of conjugacy classes of $G$. We prove that either $k(G)<n/2$ and $k(G)=o(n)$ as $n\rightarrow \infty$, or $G$ belongs to explicit families of examples.
\end{abstract}

\section{Introduction}

Throughout, $k(G)$ denotes the number of conjugacy classes of a finite group $G$. Mar\'oti \cite{marotibounding} proved that if $G$ is a primitive permutation group $G$ of degree $n$, then $k(G)\leq p(n)$, where $p(n)$ denotes the number of partitions of $n$. This bound is attained by $S_n$ in its action on $n$ points. Moreover, he proved that if the socle of $G$ is not a direct product of alternating groups, then $k(G)\leq n^6$.

In this paper, we want to improve this bound under the assumption that $G$ has nonabelian socle. In Subsection \ref{subsec: context}, we will give more context and review more results in this area.

There are two special types of primitive groups which we wish to single out.

\begin{enumerate}
    \item[(A)] Let $G$ be the symmetric group $S_d$ or the alternating group $A_d$ on $d\geq 5$ letters. For every $1\leq k< d/2$, $G$ acts primitively on the set of $k$-subsets of $\{1, \ldots, d\}$. These are in number $\binom{d}{k}$.
    \item[(B)] Let $G$ be an almost simple group with socle $\PSL_d(q)$, and assume that $G\leq \PGammaL_d(q)$. Then $G$ acts primitively on the set of $1$-subspaces of $\F_q^d$. These are in number $(q^d-1)/(q-1)$.
\end{enumerate}

Our main result says that, if $G$ is a primitive group with nonabelian socle, then either $G$ has very few conjugacy classes, or else the action of $G$ is ``related'' to (A) or (B) or to a further finitely many almost simple primitive permutation groups. The precise statement is as follows.

\begin{theorem}
\label{t: conjugacyclassesprimitive}
Let $G$ be a primitive permutation group of degree $n$ with nonabelian socle, so $\Soc(G)\cong S^r$, with $S$ nonabelian simple and $r\geq 1$. Then one of the following holds.
\begin{enumerate}
    \item $k(G)<n/2$, and $k(G)=O(n^\delta)$ for some absolute $\delta <1$.
    \item $G\leq A\wr S_r$, $A$ is an almost simple primitive permutation group of degree $m$ with socle $S$, $G$ acts in product action on $n=m^r$ points, and one of the following holds:
    \begin{itemize}
        \item[(i)] The action of $A$ on $m$ points is equivalent to an action in Table \ref{tab: final_exceptions}, and $k(G)<n^{1.31}$.
        \item[(ii)] The action of $A$ on $m$ points is isomorphic to an action described in (A) or (B). In the (B)-case, $k(G)<n^{1.9}$.
    \end{itemize}
\end{enumerate}
\end{theorem}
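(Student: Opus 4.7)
The plan is to reduce, via the O'Nan-Scott theorem, to the almost simple case, and then carry out a case analysis using the classification of maximal subgroups of each family of finite simple groups. Writing the socle as $S^r$, when the type of $G$ is diagonal, holomorph of simple, or twisted wreath, the degree $n$ is bounded below by $|S|^{r-1}$ while a standard count gives $k(G)\leq k(S)^r \cdot r!$ up to small factors. Combined with a bound $k(S) = O(|S|^{1/2})$, this is much stronger than $n/2$ and places $G$ in alternative (1).

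For the product action type $G\leq A\wr S_r$ of degree $n=m^r$, with $A$ almost simple primitive of degree $m$ and socle $S$, I would use $k(G)\leq k(A)^r \cdot p(r)$, which follows (up to minor adjustments) from $k(G)\leq k(G\cap A^r)\cdot k(G/(G\cap A^r))$ and the description of conjugacy classes of wreath products. This reduces the theorem, up to absorbing $p(r) = e^{O(\sqrt{r})}$ into a slightly larger exponent, to proving in the almost simple case that either $k(A)\leq m^\delta$ with $\delta<1$ absolute, or $A$ lies in the exceptional families (A), (B), or the table.

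The bulk of the work lies in the almost simple case. If $S = A_d$, the maximal subgroups of $S_d$ are classified by a second application of O'Nan-Scott: the intransitive ones give the $k$-subset actions that form family (A), while for every other action the degree $m$ grows polynomially of high degree, or exponentially, in $d$, whereas $k(A)\leq p(d) = e^{O(\sqrt{d})}$, so $k(A)=O(m^\delta)$ for any $\delta>0$. If $S$ is of Lie type over $\F_q$ of Lie rank $\ell$, the point stabilizer falls in one of Aschbacher's geometric classes $\mathcal{C}_1,\ldots,\mathcal{C}_8$ or the almost simple class $\mathcal{S}$. The action of $\PSL_d(q)$ on $1$-spaces is family (B), where $k(A)$ is of order $m$ and one only gets $k(G)=O(n^{1.9})$ in product action. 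For every other Aschbacher class, bounds of Liebeck--Saxl type on $[A:M]$ combined with the Fulman--Guralnick estimate $k(S) = O(q^\ell)$ yield $k(A)\leq m^\delta$ with $\delta<1$, except for a finite list of low-rank and small-field configurations collected in the table. Exceptional-type and sporadic socles contribute only boundedly many cases, handled directly.

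The main difficulty is the classical case: keeping $\delta$ uniformly below $1$ across all Aschbacher classes, particularly for small Lie rank, small field, and stabilizers whose index is close to that of a $1$-space stabilizer. A related subtlety is that the $p(r)$ and wreath-top factors in the product-action estimate, while harmless asymptotically, must be bookkept carefully for moderate $r$ and small $m$, possibly forcing a few extra configurations into item (i). Altogether, the argument is a long but structured analysis driven by the O'Nan-Scott and Aschbacher theorems, the classification of finite simple groups, and explicit class-number bounds.
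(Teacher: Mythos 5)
Your overall architecture is the same as the paper's: reduce to the almost simple case via O'Nan--Scott, prove the almost simple dichotomy by a socle-by-socle analysis using Fulman--Guralnick class-number bounds against lower bounds for the index of a maximal subgroup, and then lift to the general case. But there are two genuine gaps in the lifting step. First, in the product action case the inequality $k(G)\leq k(G\cap A^r)\cdot k(G/(G\cap A^r))$ does not reduce matters to bounding $k(A)$: the group $G\cap A^r$ is in general only known to admit a normal series of length $r$ whose factors are subgroups $B$ with $S\leq B\leq A$, and the inequality $k(B)\leq k(A)$ is \emph{false} in general (the paper records $\Omega_8^+(2).S_3$ as a failure even among the table groups). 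One therefore needs $k(B)<m/2$ for \emph{every} intermediate $B$, and this does not follow from the primitive almost simple theorem, because $B$ acting on the cosets of $B\cap M$ need not be primitive. The paper's Lemma \ref{l: kB < m/2} handles this by passing to a primitive quotient action of a subgroup $C$ with $B\cap M\leq T<C\leq B$ and then ruling out the residual possibility that $B\cap M$ fixes a point or hyperplane by comparing $m$ with the number of flags (Lemma \ref{l: flags}). Your proposal has no substitute for this.

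Second, your top-group factors are too lossy for the exact inequality $k(G)<n/2$. In the diagonal/twisted-wreath cases the correct uniform lower bound is $n\geq|S|^{r/2}$ (compound diagonal type has $n=|S|^{(\ell-1)t}$, which equals $|S|^{r/2}$ when $\ell=2$; your $n\geq|S|^{r-1}$ is wrong there), so with the Liebeck--Pyber bound $k(P)\leq 2^{r-1}$ for $P\leq S_r$ (a factor $r!$ or even $p(r)$ is not absorbable when $|S|$ is bounded and $r\to\infty$) one is forced to prove $2f(S)<|S|^{1/2}$ with $f(S)=\max\{k(A):S\leq A\leq\Aut(S)\}$. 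This fails for $S\in\{A_5,A_6,\PSL_2(7),\PSL_2(11)\}$ (e.g.\ $f(A_5)=k(S_5)=7$ while $|A_5|^{1/2}<8$), and the paper must treat these four socles by a separate direct computation on the degree-$|S|$ primitive groups with socle $S^2$. Your suggested escape of ``forcing a few extra configurations into item (i)'' is not available, since item (2) of the theorem asserts product action on $m^r$ points and the table is part of the statement; the diagonal-type exceptions would have to be eliminated outright, which requires the extra argument.
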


\begin{table}
    \centering
    \begin{tabular}{ccc}
    \hline\noalign{\smallskip}
         $G$ & $n$ & $k(G)$  \\
          \noalign{\smallskip}\hline\noalign{\smallskip}
          $M_{11}$ & 11,12 & 10 \\
          $M_{12}$ & 12,12 & 15 \\
          $M_{22}$ & 22 & 12 \\
          $M_{22}.2$ & 22 & 21 \\
          $M_{23}$ & 23 & 17 \\
          $M_{24}$ & 24 & 26 \\
          $A_7$ & 15,15 & 9 \\
          $S_8\cong \SL_4(2).2$ & 35 & 22 \\
          $\PSL_2(11)$ & 11,11 & 8 \\
          $\SOr_8^-(2)$ & 119 & 60 \\
           $\Sp_8(2)$ & 120,136 & 81 \\
           $\SOr_8^+(2)$ & 120 & 67 \\
           $\Sp_6(2)$ & 28, 36 & 30 \\
           $\PSp_4(3)\cong\SU_4(2)$ & 27,36,40,40 & 20 \\
           $\PSp_4(3).2$ & 27,36,40,40,45 & 25 \\
           $\PSU_4(3).(2\times 2)$ & 112 & 59 \\
           $\PGammaU_4(3)$ & 112 & 61 \\
           $\SU_3(3)$ & 28 & 14 \\
           $\SU_3(3).2$ & 28 & 16 \\
         \noalign{\smallskip}\hline\noalign{\medskip}
         
    \end{tabular}
    \caption{Almost simple primitive permutation groups $G$ of degree $n$ (up to equivalence) for which $k(G)\geq \frac n2$, and for which the action is not isomorphic to an action in (A) or (B).}
    \label{tab: final_exceptions}
\end{table}

We will first prove Theorem \ref{t: conjugacyclassesprimitive} in case $G$ is almost simple, and then deduce the general case. For convenience, we state separately the almost simple case (where we also give an explicit estimate for $\delta$).

\begin{theorem}
\label{t: main_almost_simple}
Let $G$ be an almost simple primitive permutation group of degree $n$. Then one the following holds.
\begin{enumerate}
    \item $k(G)<n/2$, and $k(G)=O(n^{3/4})$.
    \item Either the action of $G$ is equivalent to an action in Table \ref{tab: final_exceptions}, or the action of $G$ is isomorphic to an action described in (A) or (B). In the (B)-case, $k(G)< 100n$.
\end{enumerate}
\end{theorem}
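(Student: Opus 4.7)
The plan is to split on the isomorphism type of the socle $S$ of $G$ and, in each case, play an upper bound on $k(G)$ against a lower bound on $n=|G:H|$, where $H$ is the point stabiliser. A uniform starting estimate is $k(G)\leq k(S)\,|\Out(S)|$, which reduces everything to knowing $k(S)$ and the indices of the maximal overgroups of $H\cap S$ in $S$.

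For $S=A_d$ we have $k(S_d)=p(d)=e^{O(\sqrt d)}$ and we invoke the Liebeck-Praeger-Saxl classification of maximal subgroups of $S_d$. Intransitive stabilisers $S_k\times S_{d-k}$ produce exactly the actions in (A). Imprimitive stabilisers $S_k\wr S_{d/k}$ have index $d!/(k!^{d/k}(d/k)!)$, which dominates any power of $p(d)$. Primitive maximal subgroups have order at most $4^d$ by Praeger-Saxl, so $n\geq d!/4^d$. In both non-(A) subcases $p(d)=o(n^{3/4})$ with a wide margin, and finitely many small-$d$ anomalies feed the alternating rows of Table \ref{tab: final_exceptions}.

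For $S$ of Lie type of rank $\ell$ over $\F_q$, one has $k(G)\ll q^\ell$ (Fulman-Guralnick for classical, Liebeck-Pyber for exceptional). For exceptional $S$, $\ell$ is absolutely bounded and the minimal primitive index of $S$ grows as a fixed positive power of $|S|$, so $k(G)=O(n^{\delta})$ for some $\delta<3/4$ outside a handful of small-$q$ exceptions. For classical $S$, I would apply Aschbacher's theorem together with the Kleidman-Liebeck tables of indices $|G:H|$; in every Aschbacher class apart from the $\mathcal{C}_1$-action of $\PSL_d(q)$ on $1$-spaces (case (B)), the ratio $k(G)/n^{3/4}$ tends to $0$. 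In case (B) the rough polynomial estimate only gives $k(G)=O(n)$, so the explicit constant $k(G)<100n$ demands a sharp count of $k(\PGammaL_d(q))$, built from the standard generating-function formula for $k(\GL_d(q))$ together with a careful bookkeeping of how field and diagonal automorphisms act on classes. Finally, for sporadic $S$ a direct Atlas check produces the Mathieu rows of Table \ref{tab: final_exceptions}.

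The main obstacle will be the classical case: there are many Aschbacher classes to keep straight, and the small-rank/small-$q$ borderline examples have to be identified explicitly and collected into Table \ref{tab: final_exceptions} rather than absorbed into error terms, the finiteness of that list being itself part of the statement. A secondary difficulty, as just noted, is replacing the asymptotic bound in case (B) by the explicit constant $100$, which requires a sharp, not merely asymptotic, class count in $\PGammaL_d(q)$.
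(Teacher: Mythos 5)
Your overall strategy is the same as the paper's: bound $k(G)$ above via $k(G)\leq k(S)\,|\Out(S)|$ together with the partition function for $A_d$ and Fulman--Guralnick's $k(G)\leq 100q^r$ for Lie type, bound $n$ below via subgroup indices, and sweep the small anomalies into Table \ref{tab: final_exceptions} by ATLAS/computer checks. Two points of divergence are worth flagging. First, for classical socles other than $\PSL_d(q)$ the paper does not run through the Aschbacher classes at all: it compares $k(G)$ directly with $P(S)/2$, where $P(S)$ is the \emph{minimal} faithful permutation degree taken from the Guest--Morris--Praeger--Spiga tables, which disposes of every primitive action at once and leaves only finitely many $(S,q)$ to check by machine. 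The finer subgroup analysis (Kantor's theorem on overgroups, reducible stabilisers, flags and antiflags) is needed only for $\PSL_d(q)$, where case (B) lives, and again for the $O(n^{3/4})$ refinement; your plan to classify stabilisers in every Aschbacher class would work but is substantially more labour than necessary. Second, your stated ``secondary difficulty'' is not one: the bound $k(G)<100n$ in case (B) requires no sharp count of $k(\PGammaL_d(q))$, since Fulman--Guralnick already give the explicit inequality $k(G)\leq 100q^{d-1}$ for any almost simple group with socle $\PSL_d(q)$, and $n=(q^d-1)/(q-1)>q^{d-1}$, so the constant $100$ falls out immediately. Your correct identification of the real obstacle --- pinning down the exact finite exception list, especially at small rank and small $q$ --- matches where the paper spends most of its effort (Macdonald's exact class counts, Wall, Enomoto, and GAP/Magma computations).
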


In item (1), the exponent $3/4$ is sharp, although in most cases $k(G)=o(n^{3/4})$ as $n\rightarrow \infty$; see Remark \ref{r: littleo(n34)} for a precise statement.

In the proof of Theorem \ref{t: main_almost_simple}, an essential ingredient is the work of Fulman--Guralnick \cite{FG1}, which gives upper bounds for the number of conjugacy classes of almost simple groups of Lie type. 


We immediately make some clarifications regarding the statement of Theorem \ref{t: conjugacyclassesprimitive}.

\begin{remark}
\label{r: clarifications}

    
(i) We are not asserting that every case appearing in Theorem \ref{t: conjugacyclassesprimitive}(2) does not satisfy item (1). For instance, assume that $m=n$, and consider $G=S_d$ acting on $n=\binom{d}{k}$ points as in (A), and assume that $cd\leq k\leq \frac d2$ for some fixed constant $c$. Then it is well known that $n=\binom{d}{k}$ is exponential in $d$, while the number of conjugacy classes of $G=S_d$ is of the form $O(1)^{\sqrt d}$. In particular $k(G)=n^{o(1)}$ as $d\rightarrow \infty$.
    
(ii) In Theorem \ref{t: conjugacyclassesprimitive}(2)(ii), we can be more precise about the adjective \textit{isomorphic}, as follows. If $A$ is $A_d$ or $S_d$, then either the action of $A$ is \textit{equivalent} to the action on $k$-subsets; or else $(d,m)=(6,6)$ or $(6,15)$. Moreover, if $A$ is almost simple with socle $\PSL_d(q)$ and $A\leq \PGammaL_d(q)$, then the action of $A$ is equivalent to the action on the $1$-subspaces or $(d-1)$-subspaces of $\F_q^d$. For this, see Lemmas \ref{l: alternating} and \ref{l: psl}.
    
(iii) Whenever $G$ is almost simple with socle isomorphic to both $A_d$ and $\PSL_f(q)$, we have excluded from Table \ref{tab: final_exceptions} both the groups in (A) and (B). For instance, $G=S_6$ has $11$ conjugacy classes, and contains a subgroup $S_3\wr S_2$ of index $10$ acting transitively on $6$ points; but this does not appear in Table \ref{tab: final_exceptions} in view of the isomorphism $S_6\cong \text P\Sigma\text L_2(9)$. The same reasoning applies to the isomorphisms $\SL_2(4)\cong \PSL_2(5)\cong A_5$, $\PSL_2(7)\cong \SL_3(2)$ and $\SL_4(2)\cong A_8$.

\end{remark}

\subsection{When is \texorpdfstring{$k(G)=o(n)$}{k(G)=o(n)}?} Theorem \ref{t: conjugacyclassesprimitive} implies in particular that, if the socle of $G$ is nonabelian, then either $k(G)=o(n)$, or $G$ is ``known''. Can we prove that $k(G)=o(n)$ in further cases?

We are particularly interested in the cases contemplated in Theorem \ref{t: conjugacyclassesprimitive}(2)(i), for which we show $k(G)< n^{1.31}$. We first note that there are examples in which $k(G)> n^{1.08}$ for arbitrarily large $n$, in contrast to item (1); see Lemma \ref{r: example_M12}.

Still, it would be interesting to understand 
precisely when this happens (since there are only finitely many almost simple groups to handle).

\begin{question}
\label{q: o(n)}
Let $A$ be an almost simple primitive group on $m$ points appearing in Table \ref{tab: final_exceptions}. Determine whether \textit{every} primitive subgroup $G$ of $A\wr S_r$ on $n=m^r$ points is such that $k(G)=o(m^r)$ as $r\rightarrow \infty$. 
\end{question}

This should be related to estimating the number of conjugacy classes in wreath products, and we refer to Section \ref{sec: final_comments} for comments in this direction. See in particular Conjecture \ref{conj_o(n)}, which would provide an answer to Question \ref{q: o(n)}.

\subsection{Context}
\label{subsec: context}
There are many results in the literature which give upper bounds to the number of conjugacy classes of a finite groups in terms of various parameters. We recall some of these, focusing on permutation groups.

Kov\'acs--Robinson \cite{kovacsrobinson} proved that every permutation group of degree $n$ has at most $5^{n-1}$ conjugacy classes. This estimate was subsequently improved by Liebeck--Pyber, Mar\'oti, and Garonzi--Mar\'oti, as follows:
\begin{align*}
    &k(G)\leq 2^{n-1} &&\text{(\cite{liebeckpyber})} \\
    &k(G)\leq 3^{(n-1)/2} \,\text{ for } n\geq 3 &&\text{(\cite{marotibounding})} \\
    &k(G)\leq 5^{(n-1)/3} \,\text{ for } n\geq 4 &&\text{(\cite{garonzimaroti})}.
\end{align*}
We should mention that, in \cite{kovacsrobinson} and \cite{liebeckpyber}, various other upper bounds to $k(G)$ are proved, where $G$ is not necessarily a permutation group.

There are easy examples showing that these estimates are somewhat close to best possible, even for transitive groups. Indeed, the subgroup $S_4^{n/4}\leq S_n$ has $5^{n/4}$ conjugacy classes; and the transitive subgroup $G=S_4\wr C_{n/4}\leq S_n$ has $5^{n/4-o(n)}$ conjugacy classes (more precisely, $k(G)$ is asymptotic to $4\cdot 5^{n/4}/n$; see Lemma \ref{l: regular}). 

For primitive groups, the situation is very different. Improving results from \cite{liebeckpyber}, Mar\'oti \cite{marotibounding} proved that every normal subgroup of a primitive permutation group $G$ of degree $n$ has at most $p(n)$ conjugacy classes; and if the socle of $G$ is not a direct product of alternating groups, then $k(G)\leq n^6$. (Recall that $p(n)=O(1)^{\sqrt n}$, and in fact the asymptotic behaviour of $p(n)$ is known by famous work of Hardy--Ramanujan.) Theorem \ref{t: conjugacyclassesprimitive} can be regarded as an improvement of this statement, for the case where the socle of $G$ is nonabelian.

\subsection{Abelian socle}
\label{subsec: abelian_socle}
In this paper we do not address the case in which the socle of $G$ is abelian. In this case, we still have the bound $k(G)\leq n^6$ from \cite{marotibounding}.

There is a deep problem, known as the \textit{non-coprime $k(GV)$-problem}, which was addressed by Guralnick--Tiep \cite{guralnicktiep} and which asks (in particular) for a characterization of the affine primitive permutation groups of degree $n$ for which $k(G)>n$. A resolution of this problem, if combined with the main result of this paper, would give a characterization of all primitive permutation groups of degree $n$ for which $k(G)>n$. We refer to Guralnick--Tiep \cite{guralnicktiep}, Guralnick--Mar\'oti \cite{guralnickmaroti} and the references therein for results in this direction, partly motivated by the celebrated Brauer's $k(B)$-conjecture.

The organization of the paper is as follows. In Section \ref{sec: proof_almost_simple} we prove Theorem \ref{t: main_almost_simple}, in Section \ref{sec:general case} we prove Theorem \ref{t: conjugacyclassesprimitive}, and in Section \ref{sec: final_comments} we discuss Question \ref{q: o(n)} and make further comments.


\section{Almost simple groups}
\label{sec: proof_almost_simple}
In this section we prove Theorem \ref{t: main_almost_simple}. Regarding item (1), we prove the inequality $k(G)<n/2$ in Subsections \ref{subsec: sporadic}--\ref{subsec: lie_type}, and then we prove the asymptotic inequality $k(G)=O(n^{3/4})$ in Subsection \ref{subsec: proof_thm_almost_simple}.

First, we gather some results that we will use throughout. 

\subsection{Some preliminary results and notation}
For a finite group $G$, let $P(G)$ be the minimal degree of a faithful permutation representation of $G$. If $G$ is almost simple with socle $S$, then $P(G)$ coincides with the minimal degree of a faithful transitive permutation representation of $G$, and moreover $P(S)\leq P(G)$. The values of $P(G)$ for $G$ a finite simple group are known; they are listed for instance in \cite[Table~4]{GMPS}. 

We begin with a lemma from \cite{gallagher}. 
We will often apply this lemma with no mention.

\begin{lemma}\label{l: sub conj}
If $G$ is a finite group and $H$ is a subgroup of $G$, then
\[
k(H)/|G:H|\leq k(G) \leq |G:H|\cdot k(H).
\]
If moreover $H$ is normal in $G$, then
\[
k(G)\leq k(H)\cdot k(G/H).
\]
\end{lemma}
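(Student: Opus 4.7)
The plan is to prove all three inequalities by orbit counting, using the Cauchy--Frobenius/Burnside lemma. No character theory is strictly required.

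For the lower bound $k(H) \leq [G:H]\cdot k(G)$, I would start from the class equation $|G|\cdot k(G) = \sum_{g \in G}|C_G(g)|$. Restricting the sum to $h \in H$ and using the trivial inclusion $C_H(h) \leq C_G(h)$ gives
\[
|G|\cdot k(G) \;\geq\; \sum_{h \in H}|C_G(h)| \;\geq\; \sum_{h \in H}|C_H(h)| \;=\; |H|\cdot k(H),
\]
which rearranges to the desired inequality.

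For the upper bound $k(G) \leq [G:H]\cdot k(H)$, I would let $H$ act on $G$ by conjugation. Since every $G$-class is a union of $H$-orbits, the number of $H$-orbits on $G$ is at least $k(G)$. By Burnside, this number equals $\frac{1}{|H|}\sum_{h \in H}|C_G(h)|$. The inclusion $C_G(h)/C_H(h) \hookrightarrow G/H$ gives $|C_G(h)| \leq [G:H]\cdot|C_H(h)|$, and summing yields the bound.

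For the normal case $k(G) \leq k(H)\cdot k(G/H)$, I would count $G$-classes fibre by fibre over the conjugacy classes $\bar C$ of $G/H$ via the quotient map $\pi\colon G \to G/H$. Fix $\bar x \in \bar C$ and a lift $x \in G$; the $G$-classes contained in $\pi^{-1}(\bar C)$ correspond to the orbits of $L := \pi^{-1}(C_{G/H}(\bar x))$ acting by conjugation on the coset $xH$. Since $H \leq L$, it suffices to bound the number of $H$-orbits on $xH$. By Burnside this number equals $\frac{1}{|H|}\sum_{h \in H}|xH \cap C_G(h)|$. The crucial observation is the coset identity: whenever $xH \cap C_G(h)$ is non-empty, for any fixed $y_0$ in the intersection we have $y_0^{-1}(xH \cap C_G(h)) = H \cap C_G(h) = C_H(h)$, so the intersection has size exactly $|C_H(h)|$. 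Therefore the number of $H$-orbits on $xH$ is at most $\frac{1}{|H|}\sum_{h \in H}|C_H(h)| = k(H)$, and summing over the $k(G/H)$ fibres completes the proof.

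The only genuinely delicate step is the coset-intersection identity in the normal case; once that is in hand the rest is a clean application of Burnside, plus the fact that centralizers can only grow when one enlarges the ambient group.
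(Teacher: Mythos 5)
Your proof is correct and complete: all three inequalities are established by the standard orbit-counting (Cauchy--Frobenius) arguments, and the coset-intersection identity $|xH \cap C_G(h)| = |C_H(h)|$ in the normal case is verified correctly. The paper gives no proof of its own --- it simply cites Gallagher --- and the argument in that reference is essentially the one you present, so there is nothing to add.
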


In one occasion, we will need the following variant (see \cite[p. 447]{kovacsrobinson}).

\begin{lemma}
\label{l: improvementclasses}
Let $G$ be a finite group and let $N$ be a normal subgroup of $G$. Then
\[
k(G)\leq |G:N|\cdot \# \{G\text{-conjugacy classes of} \,\, N\}.
\]
\end{lemma}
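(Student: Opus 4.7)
The plan is to apply Burnside's orbit-counting (Cauchy--Frobenius) formula to two natural conjugation actions of $G$: the action of $G$ on itself, and the action of $G$ on $N$ (which is well defined since $N$ is normal in $G$). The first action has $k(G)$ orbits and yields
\[
k(G) = \frac{1}{|G|} \sum_{g \in G} |C_G(g)|,
\]
while for the second, the set of $n \in N$ fixed by $g$ under conjugation is exactly $C_G(g) \cap N$, so
\[
\#\{G\text{-conjugacy classes of } N\} = \frac{1}{|G|} \sum_{g \in G} |C_G(g) \cap N|.
\]

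The key step is then a term-by-term comparison of the two sums. For each $g \in G$, the second isomorphism theorem identifies $C_G(g)/(C_G(g) \cap N)$ with $C_G(g)N/N$, which is a subgroup of $G/N$; hence $[C_G(g) : C_G(g) \cap N]$ divides $|G:N|$, and in particular
\[
|C_G(g)| \leq |G:N| \cdot |C_G(g) \cap N|.
\]
Summing this inequality over $g \in G$ and dividing by $|G|$ gives exactly the stated bound.

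There is no substantive obstacle here: once the two orbit-counting sums are written side by side, the result is immediate from the elementary index comparison above. The only subtlety compared with the statement of Lemma \ref{l: sub conj} (applied with $H = N$) is that we are counting \emph{$G$-orbits} on $N$ rather than $N$-orbits, which is what allows the saving by a factor of $k(N)/k_G(N)$.
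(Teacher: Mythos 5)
Your proof is correct. Note that the paper does not actually prove Lemma \ref{l: improvementclasses} at all: it simply cites Kov\'acs--Robinson \cite[p.~447]{kovacsrobinson}, so there is no in-paper argument to match against. Your argument is a clean, self-contained one: both orbit-counting identities are right (the fixed points of $g$ acting on $N$ by conjugation are exactly $C_G(g)\cap N$, using normality of $N$), and the term-by-term comparison $|C_G(g)|\leq |G:N|\cdot|C_G(g)\cap N|$ follows, as you say, from $C_G(g)/(C_G(g)\cap N)\hookrightarrow G/N$. Summing and dividing by $|G|$ gives the claim. For comparison, the classical route (the one behind the Kov\'acs--Robinson reference, and the one implicit in the paper's related Lemma \ref{l: sub conj}) is to observe that each coset of $N$ in $G$ meets at most as many $G$-classes as $N$ itself does, or equivalently to bound $k(G)=\frac{1}{|G|}\sum_{x\in G}|C_G(x)|$ coset by coset; your version packages the same averaging idea as a single comparison of two Burnside sums, which is arguably the most economical way to state it. Your closing remark is also apt: the gain over applying Lemma \ref{l: sub conj} with $H=N$ is precisely that you count $G$-orbits on $N$ rather than $N$-classes.
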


We are now ready to begin the proof of Theorem \ref{t: main_almost_simple}.

\subsection{Sporadic groups}
\label{subsec: sporadic}

\begin{lemma}\label{l: sporadic}
Let $G$ be almost simple with socle $S$, a sporadic simple group. Let $M$ be a core-free maximal subgroup of $G$, and write $n=|G:M|$. If $k(G)\geq \frac{n}{2}$, then $G$ and $n$ are listed in Table~\ref{t: sporadic}.

\end{lemma}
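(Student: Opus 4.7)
The plan is to reduce the statement to a finite, case-by-case verification using the ATLAS. For each almost simple group $G$ with sporadic socle, the class number $k(G)$ is tabulated explicitly, and a complete list of conjugacy classes of maximal subgroups of $G$ (together with their indices) is known. The condition $k(G)\geq n/2$ is equivalent to $n\leq 2k(G)$, so one only needs to identify the core-free maximal subgroups $M\leq G$ of index at most $2k(G)$ and check the inequality for each.

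To keep the check manageable, I would first compare $2k(G)$ against the minimal degree $P(G)$ of a faithful transitive permutation representation, using the tabulation in \cite[Table~4]{GMPS}. Whenever $P(G)>2k(G)$, no core-free maximal subgroup $M\leq G$ can satisfy $|G:M|\leq 2k(G)$, so $G$ contributes nothing to Table~\ref{t: sporadic}. A short inspection of the ratios $P(G)/k(G)$ across the $26$ sporadic simple groups and their automorphism extensions immediately eliminates every socle from (roughly) $Co_3$ upward, and in fact leaves only a small set of candidates -- essentially the Mathieu groups, together with $J_1$, $J_2$, $HS$, $McL$, and the $S.2$ extensions of these.

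For each surviving almost simple group $G$, I would then walk through the ATLAS list of maximal subgroups, read off $n=|G:M|$, and keep those $M$ with $n\leq 2k(G)$. Each surviving $(G,n)$ is recorded in Table~\ref{t: sporadic} (accounting for equivalence of actions, so that a pair of $G$-conjugacy classes of maximals related by an outer automorphism is listed only once, while inequivalent actions of the same degree are listed with their multiplicity).

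The main obstacle is not conceptual but bookkeeping: one must be careful with the $S.2$ extensions (e.g.\ $M_{12}.2$, $M_{22}.2$, etc.), since maximal subgroups of $S$ may fuse under the outer automorphism and hence not lift to maximal subgroups of $G$, while new novelty maximals may appear; and for the groups with many low-index maximals (notably $M_{24}$, $M_{23}$, and $HS$) one must systematically check every subgroup of index $\leq 2k(G)$ to be sure no entry is missed.
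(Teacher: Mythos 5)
Your proposal is correct and is essentially the paper's own argument: the paper's proof consists of the single line ``We go through the ATLAS,'' and your plan (pre-filtering via $P(G)$ versus $2k(G)$, then checking the ATLAS lists of maximal subgroups, with care over novelties and fusion in the $S.2$ extensions) is just a fully spelled-out version of that finite verification. No gap.
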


Note that repeated values of $n$ in Table~\ref{t: sporadic} signify the existence of more than one action, up to equivalence, of the given degree. The same convention applies to later tables pertaining to the alternating groups and the groups of Lie type.

\begin{table}
\centering
\begin{tabular}{ccc}
\hline\noalign{\smallskip}
$G$ & $n$ & $k(G)$  \\
\noalign{\smallskip}\hline\noalign{\smallskip}
$M_{11}$ & 11,12 & 10 \\ 
$M_{12}$ & 12,12 & 15 \\
$M_{22}$ & 22 & 12 \\
$M_{22}.2$ & 22 & 21\\
$M_{23}$ & 23 & 17 \\
$M_{24}$ & 24 & 26 \\
\noalign{\smallskip}\hline\noalign{\medskip}

\end{tabular}
\caption{Faithful primitive permutation representations of degree $n$ for sporadic almost simple groups $G$ such that $k(G)\geq \frac{n}2$.}\label{t: sporadic}
\label{tab: sporadic}
\end{table}

\begin{proof}
We go through the ATLAS \cite{ATLAS}.
\end{proof}

\subsection{Alternating groups}
\label{subsec: alternating}
We recall some results that we will use. The first is an inequality of 
Pribitkin \cite{Pribitkin}, as follows.

\begin{lemma}\label{l: partition}
Let $p(d)$ be the number of partitions of the integer $d$. Then
\[
p(d)<\frac{e^{\pi\sqrt{2d/3}}}{d^{3/4}}.
\]
\end{lemma}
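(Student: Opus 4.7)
The plan is to follow the saddle-point / circle-method proof applied to the generating function $F(z) = \prod_{n \geq 1}(1-z^n)^{-1} = \sum_{d \geq 0} p(d) z^d$. I would start from the Cauchy integral representation
$$p(d) = \frac{1}{2\pi i}\oint_{|z|=r} F(z) z^{-d-1} \, dz, \qquad 0 < r < 1,$$
and select the radius $r = e^{-t}$ optimally as a function of $d$.

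The right choice of $t$ is dictated by the expansion $\log F(e^{-t}) \sim \pi^2/(6t)$ as $t \to 0^+$, which follows from the identity $\log F(e^{-t}) = \sum_{k \geq 1} 1/(k(e^{kt}-1))$ together with a Riemann-sum comparison to $\int_0^\infty du/(u(e^{ut}-1))=\pi^2/(6t)$. The naive maximum-modulus bound $p(d) \leq F(e^{-t})\,e^{dt}$ is minimized to leading order by $t_0 := \pi/\sqrt{6d}$, which produces exactly the exponent $\pi\sqrt{2d/3}$ appearing on the right-hand side of the claimed inequality. This crude estimate already gives $p(d) \leq e^{\pi\sqrt{2d/3}}$, but the polynomial factor $d^{-3/4}$ is lost.

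To recover it, I would replace the triangle-inequality bound by a genuine saddle-point evaluation of the contour integral. Expanding $\log F(re^{i\theta})$ to second order around $\theta = 0$, the second derivative in $\theta$ at the saddle is of order $t_0^{-3} \sim d^{3/2}$, so the Gaussian integral in $\theta$ contributes a factor of order $t_0^{3/2} \sim d^{-3/4}$, which is precisely the missing power. To make this rigorous one needs sharp control of $|F(re^{i\theta})|$ for $|\theta|$ bounded away from $0$, where the modular transformation formula for the Dedekind eta function (equivalently a functional equation for $\log F(e^{-t-i\theta})$) yields an estimate of the shape
$$\log|F(re^{i\theta})| = -\frac{\pi^2 t}{6(t^2+\theta^2)} + O(1),$$
which is exponentially smaller than the main term once $|\theta| \gg t$. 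Together these two estimates reduce the integral to a Gaussian of width $\sim t_0^{3/2}$, yielding $p(d) \leq C\,e^{\pi\sqrt{2d/3}}/d^{3/4}$ for some explicit constant $C$.

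The main obstacle is sharpening $C$ all the way down to $1$, which requires carrying explicit constants through each step of the saddle-point expansion and the tail estimate. Once that is achieved for all $d$ exceeding some threshold $d_0$, the finitely many remaining values $d < d_0$ are handled by direct computation of $p(d)$ via Euler's pentagonal-number recursion $p(d) = \sum_{k \geq 1}(-1)^{k-1}\bigl(p(d-k(3k-1)/2)+p(d-k(3k+1)/2)\bigr)$, and one checks the inequality numerically. This is exactly the program executed in Pribitkin's paper.
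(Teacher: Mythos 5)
First, a point of comparison: the paper does not actually prove this lemma --- it is quoted verbatim from Pribitkin \cite{Pribitkin} with no argument given --- so there is no in-paper proof to measure yours against. On its own terms, your saddle-point program is sound in outline (the full circle method even yields the Hardy--Ramanujan asymptotic $p(d)\sim e^{\pi\sqrt{2d/3}}/(4\sqrt{3}\,d)$, stronger than what is claimed), but as written it is a roadmap rather than a proof. Every quantitative step is deferred: the explicit error in the expansion of $\log F$ near the saddle, the explicit minor-arc bound, the threshold $d_0$, and the numerical verification below it. You yourself flag the reduction of the constant to $1$ as ``the main obstacle'' and do not resolve it; that obstacle is where essentially all of the work in this route lies, so the proposal has a genuine gap at its decisive step.

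Second, the route you sketch is considerably heavier than necessary, and the bound follows from two elementary inputs with no contour integration at all --- this is the point of Pribitkin's ``simple'' bounds. Since $p$ is nondecreasing, $p(d)\,x^d/(1-x)\leq\sum_{m\geq d}p(m)x^m\leq F(x)$ for $0<x<1$, whence $p(d)\leq(1-x)\,x^{-d}F(x)$; and the Dedekind eta transformation gives the exact identity
\[
F(e^{-t})=\sqrt{t/(2\pi)}\;e^{\pi^2/(6t)-t/24}\;F(e^{-4\pi^2/t}).
\]
Taking $x=e^{-t}$ with $t=\pi/\sqrt{6d}$, the exponentials combine to exactly $e^{\pi\sqrt{2d/3}}$, while the prefactor is at most $t\cdot\sqrt{t/(2\pi)}\cdot F(e^{-4\pi\sqrt{6d}})\leq \frac{\pi}{6^{3/4}\sqrt{2}}\,d^{-3/4}\,(1+O(e^{-30}))<0.6\,d^{-3/4}$ for all $d\geq 1$. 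This gives the stated inequality with room to spare, uniformly in $d$, with no stationary-phase analysis and no case-checking of small $d$. Note that the power $d^{-3/4}$ here arises as $d^{-1/2}$ from the factor $(1-x)\asymp t$ together with $d^{-1/4}$ from the factor $\sqrt{t}$ in the eta transformation, not from a Gaussian width; your accounting, which attributes the whole of $d^{-3/4}$ to the saddle-point Gaussian, omits the $\sqrt{t}$ contribution and would in fact land on $d^{-1}$ if carried through with the full transformation formula.
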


We will also need the following pair of inequalities which are an easy consequence of work of Robbins on the Stirling approximations \cite{robbins}.

\begin{lemma}\label{l: factorial}
Let $d\geq 2$ be an integer. Then
\[
\sqrt{2\pi} d^{d+1/2} e^{-d} \leq d! \leq e\, d^{d+1/2} e^{-d}.
\]
\end{lemma}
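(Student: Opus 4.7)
The plan is to deduce both inequalities directly from Robbins' refinement of Stirling's formula, which states that for every integer $d \geq 1$,
\[
\sqrt{2\pi}\, d^{d+1/2} e^{-d} \exp\!\left(\frac{1}{12d+1}\right) < d! < \sqrt{2\pi}\, d^{d+1/2} e^{-d} \exp\!\left(\frac{1}{12d}\right).
\]

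The lower bound in the lemma is then immediate: since $\exp(1/(12d+1)) > 1$, the left-hand inequality of Robbins already gives $d! > \sqrt{2\pi}\, d^{d+1/2} e^{-d}$.

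For the upper bound, it suffices to show that $\sqrt{2\pi}\, \exp(1/(12d)) \leq e$ for every $d \geq 2$, i.e.
\[
\frac{1}{12d} \;\leq\; 1 - \tfrac{1}{2}\ln(2\pi).
\]
Numerically, $1 - \tfrac{1}{2}\ln(2\pi) \approx 0.081$, while $1/(12d) \leq 1/24 \approx 0.042$ whenever $d \geq 2$, so the inequality holds. Combining this with the right-hand Robbins bound yields $d! \leq e\, d^{d+1/2} e^{-d}$, as required.

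The only genuine step is the constant comparison for $d=2$; for larger $d$ the margin only widens, so there is no real obstacle. The whole argument is a one-line invocation of Robbins together with a trivial numerical check.
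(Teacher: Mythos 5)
Your proof is correct and follows exactly the route the paper intends: the paper simply cites Robbins' refinement of Stirling's formula and leaves the constant comparison implicit, which is precisely the step you carry out (checking $\sqrt{2\pi}\,\exp(1/(12d))\leq e$ for $d\geq 2$). Nothing further is needed.
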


Finally we will need the following result of Praeger and Saxl which makes use of CFSG \cite{PraSax}.

\begin{lemma}\label{l: primitive}
Let $G\leq S_d$ and suppose that $G$ is primitive and does not contain $A_d$. Then $|G|<4^d$.
\end{lemma}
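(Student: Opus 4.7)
The plan is to invoke the O'Nan--Scott theorem to reduce to the almost simple case, and then to use CFSG to verify the bound for each family of almost simple groups. Recall that if $G\leq S_d$ is primitive with socle $N$, then $N$ is either elementary abelian, a direct power of a nonabelian simple group (with $G$ acting by diagonal, product, or twisted wreath action), or $N$ itself is nonabelian simple.

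In the affine case, $G\leq \AGL_a(p)$ with $d=p^a$, so $|G|\leq p^a\,|\GL_a(p)|<p^{a^2+a}$, which is easily seen to be below $4^{p^a}$ except in a handful of small cases that may be inspected directly. In the product action case, $G\leq A\wr S_k$ acts on $d=m^k$ points with $A$ almost simple primitive of degree $m\geq 5$; granting the almost simple bound, $|G|\leq|A|^k\cdot k!\leq 4^{mk}\cdot k!$, which is below $4^{m^k}$ since $m^k-mk-\log_4(k!)>0$ for all relevant $m$ and $k$. The diagonal and twisted wreath cases reduce analogously, since in both the order of $G$ is polynomial in $|T|$ while $d$ is at least $|T|^{k-1}$ for some nonabelian simple $T$.

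The core of the proof is the almost simple case. Here we let $S=\Soc(G)$, let $H$ be a maximal core-free subgroup with $d=|G:H|$, and verify $|G|<4^d$ using CFSG. For $S$ sporadic this is a finite inspection of the ATLAS. For $S=A_n$ acting other than on $n$ points (the natural action is excluded since $G$ would then contain $A_d$), the minimum such faithful degree is essentially $\binom{n}{2}$, with a short list of small exceptions to be checked directly, and Lemma~\ref{l: factorial} yields $n!<4^{\binom{n}{2}}$ for $n\geq 5$. For $S$ of Lie type, one combines a standard bound on $|S|$ with the minimum faithful permutation degree $P(S)$ from \cite[Table~4]{GMPS}, multiplies by the order of the outer automorphism group (which is at most polynomial in the defining parameters), and compares with $4^{P(S)}$ to obtain the required inequality in each family.

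The main obstacle will be the low-rank classical groups over small fields, where $|S|$ and $P(S)$ are closest in size; these will likely require either a bound tailored to the specific maximal subgroup $H$ (rather than just to $P(S)$), or a case-by-case verification. Similarly, care is needed for alternating groups acting on $k$-subsets with $k$ close to $n/2$, to ensure that $\binom{n}{k}$ grows fast enough compared to $n!$, and for the few exceptional isomorphisms among small simple groups where multiple primitive actions of small degree must be reconciled.
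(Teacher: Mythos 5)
You should first be aware that the paper does not prove this lemma at all: it is imported verbatim from Praeger and Saxl \cite{PraSax}, so there is no internal proof to compare against. (The original Praeger--Saxl argument is in fact elementary and does not pass through the O'Nan--Scott theorem; the CFSG-based route you outline is closer in spirit to the later, stronger order bounds of Cameron and Mar\'oti.) Your strategy is viable in principle, but as submitted it is a plan rather than a proof: the almost simple case, which you correctly identify as the core, is only described (``one combines a standard bound on $|S|$ with $P(S)$ \dots and compares''), and the genuinely delicate instances --- low-rank classical groups over small fields, the small exceptional isomorphisms --- are flagged as obstacles to be dealt with rather than dealt with. One small remark: your worry about $k$-subsets with $k$ near $n/2$ is misplaced, since $\binom{n}{k}$ is \emph{largest} there; the binding case is $k=2$, which you already cover.

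There is also a concrete error in the product-action reduction. The hypothesis ``$G$ does not contain $A_d$'' excludes only the natural top-level action; it does not prevent the component $A$ in $G\le A\wr S_k$ acting on $d=m^k$ points from being $S_m$ or $A_m$ in its natural action on $m$ points (e.g.\ $S_m\wr S_2$ on $m^2$ points is primitive and satisfies the hypothesis). For $m\ge 9$ one has $m!>4^m$, so the step ``granting the almost simple bound, $|A|^k\cdot k!\le 4^{mk}\cdot k!$'' is simply false in that case: the almost simple bound you are granting does not apply to $A$. The conclusion survives via the direct estimate $(m!)^k\,k!<4^{m^k}$, which reduces to $mk\log_2 m+\log_2 k!<2m^k$ and holds for all $m\ge 5$, $k\ge 2$; but that computation has to be done separately and is not a consequence of the almost simple case. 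As it stands, the inductive architecture of your argument has a hole exactly where the extremal examples for the $4^d$ bound live.
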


We now prove a lemma, which is known (see \cite[Corollary 2.7]{FG1}).

\begin{lemma}\label{l: alt sym}
For all $d$, $k(A_d)\leq k(S_d)$.
\end{lemma}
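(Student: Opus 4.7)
For $d = 1$ the inequality is trivial, so I assume $d \geq 2$. My plan is to reduce the claim to a combinatorial inequality about partitions of $d$, then establish that inequality via an explicit injection. Using the standard parametrisation of conjugacy classes, the classes of $S_d$ are indexed by partitions $\lambda$ of $d$; such a class lies in $A_d$ iff $d - \ell(\lambda)$ is even, and it splits into two $A_d$-classes iff the parts of $\lambda$ are odd and pairwise distinct. Letting $a$, $b$, $f$ denote, respectively, the number of partitions of $d$ with $d - \ell(\lambda)$ even, those with $d - \ell(\lambda)$ odd, and those into distinct odd parts, this yields $k(S_d) = a + b$ and $k(A_d) = a + f$, so the lemma reduces to proving $f \leq b$.

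The core step is then to construct an injection $\mu$ from the set of partitions of $d$ into distinct odd parts into the set of partitions of $d$ with $d - \ell(\lambda)$ odd. Given $\lambda = (\lambda_1 > \cdots > \lambda_k)$ with the $\lambda_i$ odd and distinct, the conditions $\lambda_1 > \lambda_2 \geq 1$ together with oddness of $\lambda_1$ force $\lambda_1 \geq 3$. I would define $\mu(\lambda)$ to be the partition whose parts (arranged in decreasing order) are $\lambda_1 - 1, \lambda_2, \ldots, \lambda_k, 1$. Since $d$ is a sum of $k$ odd numbers, $d \equiv k \pmod 2$; as $\ell(\mu(\lambda)) = k + 1$, the quantity $d - \ell(\mu(\lambda))$ is odd, so $\mu(\lambda)$ lies in the target set.

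Injectivity of $\mu$ follows because $\lambda_1 - 1 \geq \lambda_2 + 1 > \lambda_2$, so $\lambda_1 - 1$ is strictly the largest part of $\mu(\lambda)$ and determines $\lambda_1$; removing this largest part together with one copy of the part $1$ then recovers the tail $(\lambda_2, \ldots, \lambda_k)$, which remains correct even in the edge case $\lambda_k = 1$ (where the part $1$ appears twice in $\mu(\lambda)$). The only real obstacle I anticipate is this bookkeeping in the degenerate case, which is easily checked. As an alternative, one could argue via Clifford theory applied to the normal subgroup $A_d \trianglelefteq S_d$ of index $2$, deriving the identity $k(A_d) = (k(S_d) + 3 f)/2$ (noting that by Euler's theorem $f$ also counts self-conjugate partitions of $d$, which index the irreducible characters of $S_d$ fixed by tensoring with the sign), thereby reducing the claim to $3 f \leq k(S_d)$; but proving this latter inequality requires comparable combinatorial input, so the direct injection above is cleaner.
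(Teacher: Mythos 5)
Your proof is correct, and for the key combinatorial step it takes a genuinely different (and cleaner) route than the paper. Both arguments reduce the lemma, via the standard splitting criterion for $S_d$-classes inside $A_d$, to the same inequality: the number of partitions of $d$ into distinct odd parts is at most the number of partitions $\lambda$ of $d$ with $d-\ell(\lambda)$ odd. The paper checks $d\leq 9$ by hand and, for $d>9$, proves the inequality by replacing the two largest parts of a distinct-odd-parts partition (whose sum $\ell$ is even and at least $10$) by one of sufficiently many partitions of $\ell$ into an odd number of even parts, with the single-part partition of odd $d$ requiring separate treatment. Your injection $\lambda\mapsto(\lambda_1-1,\lambda_2,\dots,\lambda_k,1)$ avoids all of this: it is uniform in $d\geq 2$, needs no preliminary computation, the parity check $d-(k+1)\equiv 1\pmod 2$ is immediate from $d\equiv k\pmod 2$, and injectivity follows since $\lambda_1-1>\lambda_2$ makes the largest part uniquely recoverable. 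The one point you should state explicitly is that $\lambda_1\geq 3$ also holds in the one-part case $k=1$ (your justification via $\lambda_1>\lambda_2\geq 1$ presupposes $k\geq 2$; for $k=1$ one instead uses that $\lambda_1=d$ is odd and $d\geq 2$). Your Clifford-theoretic aside is consistent — it rests on the classical identity that the excess of even-permutation partitions over odd-permutation ones equals the number of self-conjugate partitions — but, as you note, it buys nothing here.
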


The proof can be easily modified to obtain a strict inequality for $d\geq 4$.

\begin{proof}
For $d\leq 9$ we can check this directly. Thus assume that $d>9$ and assume that $k(A_d)$ is equal to the number of even partitions of $d$ plus the number of partitions of $d$ into distinct odd numbers; on the other hand $k(S_d)$ is equal to the number of even partitions of $d$ plus the number of odd partitions of $d$. Thus we need to show that the number of partitions of $d$ into distinct odd numbers is less than or equal to the number of odd partitions of $d$.

Given $\ell\geq 10$, a positive even integer, observe that there are precisely $\lfloor\frac{\ell}{4}\rfloor$ ways to partition $\ell$ into two distinct odd numbers. On the other hand, let $Y_\ell$ be the set of partitions of $\ell$ into an odd number of even numbers. It is easy to see that $|Y_\ell|>\lfloor\frac{\ell}{4}\rfloor$: we simply use the partitions
\[
\ell, \, (\ell-4,2^2), \, (\ell-8,2^4),\dots \textrm{ and } (\ell-8, 4^2).
\]
Now let $X$ be the set of all partitions of $d$ into at least two distinct odd numbers; let $X_\ell$ be the subset of all partitions of $X$ for which the sum of the two largest parts is equal to $\ell$. Since $d\geq 10$, observe that $\ell\geq 10$. To each partition $P\in X_\ell$ we can associate a partition which has the same parts as $P$, apart from the largest two, which are replaced by a partition from $Y_\ell$. The counting above implies that we can do this in such a way that the association yields an injective function from $X$ to a proper subset of the odd partitions of $X$. 

If $d$ is even, then this yields that $k(A_d)<k(S_d)$. If $d$ is odd, we must also associate an odd partition with the partition of $d$ consisting of a single part. But since our injective function is not onto, this can be done. The result follows.
\end{proof}

Now we can prove the main result of this subsection.

\begin{lemma}\label{l: alternating}
Let $G$ be almost simple with socle $S\cong A_d$. Let $M$ be a core-free maximal subgroup of $G$, and write $n=|G:M|$. If $k(G)\geq \frac{n}{2}$, then one of the following holds.
\begin{enumerate}
    \item $G$ and $n$ are listed in Table~\ref{tab: alt ex}.
    \item $M$ is intransitive in its action on $d$ points, thus $n=\binom{d}{k}$ for some integer $k$ such that $1\leq k< \frac12d$.
    \item $(d,n)=(6,6)$ or $(6,15)$, and the action of $G$ on the cosets of $M$ is isomorphic, but not equivalent, to the action on the coset of a maximal intransitive subgroup.
\end{enumerate}
\end{lemma}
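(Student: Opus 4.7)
The plan is to split according to the classification of transitive actions by the action of $M$ on $\{1,\dots,d\}$: intransitive, transitive imprimitive, or primitive. Throughout, we use $k(G) \leq k(S_d) = p(d)$ by Lemma \ref{l: alt sym}, so it suffices to show $p(d) < n/2$ outside the exceptional cases, with appropriate care for the $d=6$ exceptional isomorphism.

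\textbf{Step 1 (Intransitive case).} If $M$ is intransitive on $\{1,\dots,d\}$, maximality forces $M$ to stabilize a $k$-subset with $1 \leq k < d/2$, so $n = \binom{d}{k}$. This yields conclusion (2) with no further estimate needed.

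\textbf{Step 2 (Primitive case).} Suppose $M$ acts primitively on $d$ points and does not contain $A_d$. Lemma \ref{l: primitive} gives $|M| < 4^d$, so
\[
n = \frac{|G|}{|M|} \geq \frac{d!/2}{4^d}.
\]
Combining the Stirling-type lower bound on $d!$ from Lemma \ref{l: factorial} with the partition upper bound $p(d) < e^{\pi\sqrt{2d/3}}/d^{3/4}$ from Lemma \ref{l: partition}, one sees that $d!/(2\cdot 4^d)$ dominates $2\,p(d)$ for all $d$ beyond an explicit modest bound; the remaining small $d$ are checked directly.

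\textbf{Step 3 (Transitive imprimitive case).} Here $M \leq S_a \wr S_b$ with $d = ab$ and $a,b \geq 2$, so $|M| \leq (a!)^b \, b!$ and
\[
n \geq \frac{d!}{2\,(a!)^b\, b!}.
\]
A direct Stirling analysis using Lemma \ref{l: factorial} shows the ratio $d!/((a!)^b b!)$ grows faster than any fixed exponential in $\sqrt d$, uniformly in the factorization $d=ab$; the worst case (smallest ratio) occurs at $a = 2,\ b = d/2$, which one handles separately to show $p(d) < n/2$ for all but finitely many $d$. Again the remaining $d$ are checked directly.

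\textbf{Step 4 (Small $d$ and compilation of the table).} For the cutoff values of $d$ produced by Steps 2 and 3 (a modest range), one enumerates the core-free maximal subgroups of $S_d$ and $A_d$, using the ATLAS or a direct computation, and records those actions with $k(G) \geq n/2$ that are not the intransitive actions of Step 1. This yields Table \ref{tab: alt ex} in conclusion (1).

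\textbf{Step 5 ($d=6$ exceptional isomorphism).} Because of the outer automorphism of $S_6$, the intransitive point-stabilizer $S_5$ and the imprimitive stabilizer $S_2 \wr S_3$ have partner classes of maximal subgroups (of types $\PGL_2(5)$ and $S_3 \wr S_2$) giving actions of degrees $6$ and $15$ that are isomorphic but not equivalent to intransitive actions. These are conclusion (3); one verifies that no other $d$ admits such a coincidence by inspecting the outer automorphism groups.

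The main obstacle I expect is the imprimitive step: obtaining a bound on $|M|$ which beats $p(d)$ uniformly across all factorizations $d = ab$, and in particular handling the worst case $a=2$, $b=d/2$ where $|M|$ is comparatively large. The primitive case is essentially free once Lemma \ref{l: primitive} is in hand because the $4^d$ bound is enormously weaker than $d!$, and the small-$d$ enumeration, while tedious, is routine.
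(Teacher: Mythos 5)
Your decomposition (intransitive / primitive / transitive imprimitive, Praeger--Saxl plus Stirling plus the partition bound, a finite computation for small $d$, and the $d=6$ outer-automorphism coincidences) is exactly the route the paper takes. There is, however, a concrete error at the point you yourself flag as the crux. In Step 3 the extremal factorization is not $a=2$, $b=d/2$: with $d/2$ blocks of size $2$ the index is $d!/(2^{d/2}(d/2)!)\sim (d/e)^{d/2}$, which is superexponential, whereas with $2$ blocks of size $d/2$ the index is $\tfrac{1}{2}\binom{d}{d/2}\sim 2^{d-1}/\sqrt{\pi d/2}$, which is only singly exponential in $d$ and is the true minimum over all factorizations (the paper verifies this by showing that the relevant concave function of the number of blocks is minimized at an endpoint of its range and then comparing the two endpoint values). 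If you ``handle separately'' the wrong extreme and infer the uniform bound from it, the argument as written does not cover the genuine worst case. The strategy survives because $2^{d-1}/\sqrt{d}$ still dominates $2p(d)<2e^{\pi\sqrt{2d/3}}/d^{3/4}$ once $d$ exceeds a modest threshold (the paper uses $d>20$), but you must identify and bound the correct minimum; note also that your claim that the index ``grows faster than any fixed exponential in $\sqrt d$ uniformly in the factorization'' is true, yet it is not what an analysis of your identified worst case would establish.

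A second, more minor slip: in Step 5 the partner of $S_2\wr S_3$ under the outer automorphism of $S_6$ is the intransitive subgroup $S_4\times S_2$ (both of order $48$ and index $15$), not $S_3\wr S_2$ (which has index $10$, so cannot correspond under an automorphism); it is precisely the pairing with $S_4\times S_2$ that makes the degree-$15$ imprimitive action isomorphic to the action on $2$-subsets, as conclusion (3) requires. This would be caught in the computational check of small $d$, but as stated it is incorrect.
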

In item (3), if $(d,m)=(6,6)$ we have $G=A_6$ or $S_6$, and $M=S_5\cap G$, where $S_5$ is a subgroup of $S_6$ acting primitively on $6$ points. If $(d,m)=(6,15)$, again $G=A_6$ or $S_6$, and $M=(S_2\wr S_3)\cap G$, where $S_2\wr S_3$ acts transitively (and imprimitively) on $6$ points. (Note that in the latter case, if $G=A_6$ then $k(G)<n/2$.)

\begin{table}
    \centering
    \begin{tabular}{ccc}
    \hline\noalign{\smallskip}
          $G$ & $n$ & $k(G)$  \\
          \noalign{\smallskip}\hline\noalign{\smallskip}
$A_5$ & 6 & 5 \\
$S_5$ & 6 & 7 \\
$A_6=\PSL_2(9)$ & 10 & 7 \\
 $A_6.2=\PGL_2(9)$ & 10 & 11 \\
 $A_6.2=S_6$ & 10 & 11 \\
 $A_6.2=M_{10}$ & 10 & 8 \\
 $A_6.(2\times 2) = \PGammaL_2(9)$ & 10 & 13 \\
$A_7$ & 15,15 & 9 \\
$A_8$ & 15,15 & 14 \\
$S_8$ & 35 & 22 \\
          \noalign{\smallskip}\hline\noalign{\medskip}
    \end{tabular}
    \caption{Faithful primitive permutation representations of degree $n$ for almost simple groups $G$ with socle $A_d$ such that $k(G)\geq \frac n2$, and the action is not isomorphic to an action in (A).}
    \label{tab: alt ex}
\end{table}


\begin{proof}
For $d\leq 8$, we use the ATLAS \cite{ATLAS} together with GAP \cite{GAP} to obtain the given list. For $9\leq d \leq 20$ we use GAP to check that no examples occur. Assume, then, that $d>20$.

Let us suppose, first, that $M$ is primitive in its action on $d$ points. Then Lemmas~\ref{l: partition}--\ref{l: alt sym} imply that it is sufficient to prove the following
\[
\frac{e^{\pi\sqrt{2d/3}}}{d^{3/4}} < \frac{\sqrt{2\pi} d^{d+1/2}}{4 \cdot e^d \cdot 4^d}.
\]
If we assume that the other inequality holds, we get
\begin{align*}
    e^{\pi\sqrt{2d/3}} &\geq \frac{\sqrt{2\pi} \cdot d^{d+5/4}}{4 \cdot e^d \cdot 4^d} \\
\implies 2\cdot e^{\pi\sqrt{2d/3}} \cdot (4e)^d &\geq d^{d+5/4} \\
\implies  2\cdot e^{2.6\sqrt{d}} \cdot (4e)^d &\geq d^{d+5/4} \\
\implies  2\cdot (5e)^{d+\sqrt{d}} &\geq d^{d+5/4} \\
\implies  d &\leq 20.
\end{align*}
Since $d>20$, the result follows.

Let us suppose, next, that $M$ is imprimitive in its action on $d$ points. Then
\[
n=\frac{d!}{(k!)^\ell \ell!},\]
where $d=k\ell$ and $k, \ell \geq 2$. Now Lemma~\ref{l: factorial} implies that
\begin{align*}
    n &\geq \frac{\sqrt{2\pi}\cdot d^{d+1/2} \cdot e^{-d}}{(ek^{k+1/2}e^{-k})^\ell \cdot e\cdot \ell^{\ell+1/2}\cdot e^{-\ell}} = \frac{\sqrt{2\pi}\cdot d^{d+1/2}}{e\cdot k^{(k+1/2)\ell}\cdot \ell^{\ell+1/2}} \\
    & = \frac{\sqrt{2\pi}}{e}\cdot \frac{\ell^{d-\ell}}{k^{(\ell-1)/2}}\\
    & \geq \frac{\ell^{d-\ell}}{k^{\ell/2}}  \\ 
    & = \frac{\ell^{d-\ell}}{(d/\ell)^{\ell/2}},
\end{align*}
which implies that
\begin{align*}
    (d-\ell)\log(\ell)-\frac{\ell}{2}\log(d) + \frac{\ell}{2}\log(\ell) &\leq \log(n) \\
    \implies (d-\frac{\ell}{2})\log(\ell)-\frac{\ell}{2}\log(d) &\leq \log(n).
\end{align*}
If we fix $d$ and set $f(\ell)$ to be the function on the left hand side of the final inequality, with $\ell\in(0,d)$, then one computes that 
$f'(\ell)$ is a decreasing function. In particular, $f(\ell)$ takes its minimum value in the range $\ell\in(0,d)$ either when $\ell$ is as large as possible or as small as possible.

If $\ell$ is as small as possible, then $\ell=2$ and we obtain that
\[
f(2)\leq\log(n) \iff d-\log(d)-1\leq \log(n).
\]
If $\ell$ is as large as possible, then $\ell=\frac{d}{2}$ and we obtain that
\[
f\left(\frac{d}{2}\right)\leq\log(n) \iff \frac{d}{2}\log(d)-\frac{3d}{4}\leq \log(n).
\]
Now it is easy to check that, for $d\geq 4$, 
\[d-\log(d)-1 \leq \frac{d}{2}\log(d)-\frac{3d}{4},\]
and we conclude that
\begin{equation}
\label{eq:symm}
    d-\log(d)-1\leq\log(n).
\end{equation}   
On the other hand, if $k(G)\geq \frac{n}{2}$, then Lemmas~\ref{l: partition} and \ref{l: alt sym} imply that
\[
\frac n2 <\frac{e^{\pi\sqrt{2d/3}}}{d^{3/4}}.
\]
Taking logs and using \eqref{eq:symm}, we get 
\[
d < \frac 14 \log d + \pi \sqrt{\frac{2d}{3}}\log e + 2,
\]
which, since $d>20$, is false. This concludes the proof.
\end{proof}

\subsection{Groups of Lie type}
\label{subsec: lie_type}
For groups of Lie type, we will use results of Fulman and Guralnick giving bounds on the number of conjugacy classes \cite{FG1}.

\begin{theorem}\label{t: fg}
Let $G$ be a connected simple algebraic group of rank $r$ over a field of positive characteristic. Let $F$ be a Steinberg--Lang endomorphism of $G$ with $G^F$ a finite 
group of Lie type over the field $\F_q$. Then
\[k(G^F)\leq \min\{27.2q^r, q^r+68q^{r-1}\}.\]
\end{theorem}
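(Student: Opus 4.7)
The plan is to apply Jordan decomposition together with Steinberg's count of semisimple classes. Every element $g \in G^F$ factors uniquely as $g = su = us$ with $s$ semisimple and $u$ unipotent, so the conjugacy classes of $G^F$ correspond bijectively to pairs $([s],[u])$ where $[s]$ is a semisimple class of $G^F$ and $[u]$ is a unipotent class of the finite centralizer $\C_{G^F}(s)$. Hence
\[
k(G^F) \;=\; \sum_{[s] \text{ semisimple}} \#\{\text{unipotent classes of }\C_{G^F}(s)\}.
\]

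First I would recall Steinberg's theorem: when $G$ is simply connected the number of semisimple classes of $G^F$ is exactly $q^r$, and in arbitrary isogeny type it is still $O(q^r)$, obtained by passing to the simply connected cover and tracking the fibres of the central isogeny. I would then split the semisimple classes into regular ones, for which $\C_G(s)^\circ$ is a maximal torus, and the remainder. Regular classes are in number $q^r - O(q^{r-1})$, and each contributes exactly one unipotent class (the identity), since $\C_{G^F}(s)$ is then a (twisted) torus. This already accounts for the leading term $q^r$ in the bound $q^r + 68q^{r-1}$.

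Next I would bound the contribution of the non-regular semisimple classes. Because the non-regular semisimple locus has codimension at least one in $G$, the explicit parametrisation of semisimple classes of $G^F$ (via $F$-conjugacy classes of $W$-orbits together with rational points on the associated maximal tori) shows that there are only $O(q^{r-1})$ such classes. For any non-regular $s$, the connected centralizer $\C_G(s)^\circ$ is a proper connected reductive subgroup of rank $r$, and the number of unipotent classes in $\C_{G^F}(s)$ is bounded by an absolute constant depending only on the root system, using the Bala--Carter classification in good characteristic and the Lusztig--Spaltenstein tables in bad characteristic. Multiplying these two estimates produces the $68q^{r-1}$ term.

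The crude bound $27.2\,q^r$ should follow by combining the overall count of $O(q^r)$ semisimple classes with the maximum number of unipotent classes in any finite reductive subgroup of rank at most $r$, the constant being calibrated against the worst case (essentially type $E_8$). The main obstacle throughout is keeping the numerical constants $27.2$ and $68$ uniform in bad characteristic, where unipotent counts can deviate from their good-characteristic values; I would address this by a case-by-case verification across Dynkin types, using the explicit tables of Liebeck--Seitz to confirm that the stated constants suffice in every small-characteristic situation.
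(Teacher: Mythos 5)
This statement is not proved in the paper at all: it is quoted verbatim from Fulman--Guralnick \cite{FG1} as an external input, so there is no internal argument to compare against. Your sketch follows the natural conceptual route (Jordan decomposition, Steinberg's count of $q^r$ semisimple classes for the simply connected group, and a bound on the unipotent contributions of centralizers), and this is indeed how one proves the qualitative statement $k(G^F)=q^r+O(q^{r-1})$. But as written it contains a genuine error at precisely the step that is supposed to produce the constants. You bound the number of unipotent classes of $\C_{G^F}(s)$ by ``an absolute constant depending only on the root system'' and propose to calibrate $27.2$ against ``the worst case (essentially type $E_8$)''. The theorem, however, covers classical groups of unbounded rank $r$, and there the number of unipotent classes of a rank-$r$ reductive group is not bounded by any constant: for $\SL_{r+1}(q)$ the unipotent classes are parametrized by (at least) the partitions of $r+1$, so already the single semisimple class $s=1$ contributes roughly $p(r+1)$ unipotent classes, and non-regular $s$ with large centralizers contribute similarly growing amounts. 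There is no worst-case root system to calibrate against; the argument must instead balance the growing unipotent counts of large centralizers against the shrinking number of semisimple classes possessing such centralizers, and it is exactly this weighted sum that Fulman--Guralnick control, via generating-function identities of Wall/Macdonald type for the classical families and exact class-number data for the exceptional groups. The constants $27.2$ and $68$ are outputs of that case-by-case computation, not of the soft codimension argument you describe.

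A secondary gap: your reduction to the simply connected case by ``tracking the fibres of the central isogeny'' is asserted but not carried out. For adjoint and intermediate isogeny types the number of semisimple classes is not exactly $q^r$, and the component groups $\C_G(s)/\C_G(s)^{\circ}$ affect both the count of rational semisimple classes and the count of unipotent classes of the (possibly disconnected) finite centralizer $\C_{G^F}(s)$. This can be handled, but it is another place where the stated numerical bounds depend on computations your outline does not supply.
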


\begin{theorem}\label{c: fg}
Let $G$ be an almost simple group with socle $S$, a 
simple group of Lie type of untwisted rank $r$ defined over $\F_q$. Then $k(G)\leq 100q^r$.
\end{theorem}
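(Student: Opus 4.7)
The plan is to extend Theorem \ref{t: fg} from $\GA^F$ to the almost simple case by controlling the contribution of outer automorphisms. First, note that if $\widehat{S} = \mathrm{Inndiag}(S)$ denotes the inner-diagonal group of $S$, then $\widehat{S} = \GA^F$ for the adjoint algebraic group $\GA$, so Theorem \ref{t: fg} yields $k(\widehat{S}) \leq 27.2\, q^r$. The quotient $\Out(S)/\mathrm{Outdiag}(S)$ is generated by a cyclic field-automorphism group of order $\log_p q$ together with a small graph-automorphism group, so a naive application of Lemma \ref{l: sub conj} gives only $k(G) \leq O(q^r \log q)$, which is too weak.

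To close the gap, I would partition $G$ into cosets of $\widehat{S}_G := G \cap \widehat{S}$; each $G$-conjugacy class lies in exactly one coset, and the number of such classes inside a coset $g\widehat{S}_G$ is bounded by the number of $g$-twisted $\widehat{S}_G$-conjugacy classes, which in turn equals the number of $g$-invariant ordinary classes of $\widehat{S}_G$. For $g$ with field-automorphism component of order $m$, Shintani descent identifies these $g$-invariant classes with the ordinary classes of $\GA^{F'}$ for a smaller Frobenius $F'$ whose fixed field is $\F_{q^{1/m}}$, whence a second application of Theorem \ref{t: fg} gives a bound of at most $27.2\, q^{r/m}$ for that coset. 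Analogous bounds apply when graph or graph-field automorphisms appear, with the twisted fixed-point group becoming a twisted form of $\GA$ of comparably reduced order.

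Summing across cosets yields
\[
k(G) \;\leq\; k(\widehat{S}_G) \;+\; C \sum_{m \geq 2} q^{r/m},
\]
whose tail is dominated by $q^{r/2}$ and therefore absorbed into an absolute constant multiple of $q^r$. Bookkeeping the numerical constants and verifying small-rank or small-$q$ exceptions (via the ATLAS \cite{ATLAS} or GAP \cite{GAP}) then gives $k(G) \leq 100\, q^r$. The main obstacle will be a uniform application of Shintani descent when graph and graph-field automorphisms mix with diagonal ones (notably for types $D_4$, $E_6$, and the Suzuki/Ree families), together with keeping the absolute constant below $100$ across every Lie type; this is precisely where the paper can (and likely does) appeal to the refined bounds proved by Fulman and Guralnick in \cite{FG1}.
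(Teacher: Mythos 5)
The first thing to say is that the paper does not prove this statement: Theorem \ref{c: fg}, like Theorem \ref{t: fg}, is quoted verbatim from Fulman--Guralnick \cite{FG1} (the surrounding text announces that these are ``results of Fulman and Guralnick''), so there is no in-paper argument to compare yours against. Your sketch is a fair reconstruction of the general strategy of \cite{FG1} --- split $G$ into cosets of the inner-diagonal part, bound the classes in each coset by the number of invariant classes of the normal subgroup, and use descent to a smaller or twisted form --- but as a standalone proof it has a genuine hole at the very first step. Theorem \ref{t: fg} bounds $k(\GA^F)$ for the full fixed-point group of the adjoint group, i.e.\ it bounds $k(\mathrm{Inndiag}(S))$; it says nothing directly about $k(H)$ for a proper intermediate subgroup $S\leq H<\mathrm{Inndiag}(S)$, and $\widehat{S}_G=G\cap\mathrm{Inndiag}(S)$ is typically such an $H$. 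Passing to $H$ via Lemma \ref{l: sub conj} in either direction costs a factor of $|\mathrm{Inndiag}(S):H|$ or $|H:S|$, which in type $A_{d-1}$ can be as large as $(d,q-1)$ and so cannot be absorbed into an absolute constant when $d$ grows. Controlling $k(H)$ for intermediate subgroups requires the finer analysis of class fusion that occupies much of \cite{FG1}, and your coset argument inherits the same problem, since the $g$-stable classes you must count live in $\widehat{S}_G$, not in $\GA^F$.

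Two smaller issues: your displayed coset sum runs only over field-automorphism orders $m\geq 2$ and so silently omits the cosets whose outer part is a pure graph or graph-diagonal automorphism; their contribution is governed by the rank of the fixed subgroup of the graph automorphism rather than by a subfield, and while it is of lower order it is not covered by the formula as written. And even granting everything, getting the constant down to $100$ requires the family-by-family estimates of \cite{FG1} (several of which this paper cites individually elsewhere, e.g.\ $k(\PSL_d(q))\leq 2.5q^{d-1}$), not just the single bound $27.2q^r$ plus a tail. None of this makes your outline wrong in spirit --- it is essentially the Fulman--Guralnick argument --- but the intermediate-subgroup step is a real gap, and the paper itself sidesteps the entire question by citation.
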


First we deal with exceptional groups.

\begin{lemma}\label{l: exceptional}
Let $G$ be almost simple with socle $S$, a simple exceptional group of Lie type. Then $k(G)< \frac12|G:M|$ for all core-free maximal subgroups $M$ of $G$.
\end{lemma}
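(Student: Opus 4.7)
The plan is to combine two ingredients: an upper bound on $k(G)$ from Theorem \ref{c: fg}, and a lower bound on $|G:M|$ coming from the minimal faithful permutation degree. Since $M$ is core-free in $G$, we have $|G:M|\geq P(G)\geq P(S)$, so it suffices to show that $200\, q^{r}\leq P(S)$, where $r$ is the untwisted rank of $S$ and $q$ the order of its field of definition.

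First I would go through the list of exceptional families one at a time: $^{2}B_{2}(q)$, $^{2}G_{2}(q)$, $G_{2}(q)$, $^{3}D_{4}(q)$, $^{2}F_{4}(q)$, $F_{4}(q)$, $E_{6}(q)$, $^{2}E_{6}(q)$, $E_{7}(q)$, $E_{8}(q)$. For each, the value of $P(S)$ is tabulated in \cite[Table~4]{GMPS}, and in every case $P(S)$ grows polynomially in $q$ of degree strictly greater than $r$ (for instance $P(G_{2}(q))\sim q^{5}$ with $r=2$, $P(F_{4}(q))\sim q^{15}$ with $r=4$, and $P(E_{8}(q))\sim q^{57}$ with $r=8$). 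Consequently the inequality $200\, q^{r}\leq P(S)$ holds for all but finitely many small values of $q$, with an explicit threshold depending only on the family.

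The remaining finite set of small-$q$ exceptions I would dispose of by direct inspection, using the ATLAS \cite{ATLAS} and the character table library of \cite{GAP}, checking that $k(G)<|G:M|/2$ for every core-free maximal subgroup $M$; since only a bounded list of groups is involved (and each such group has only finitely many conjugacy classes of maximal subgroups), this is a finite verification.

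The main obstacle I anticipate lies with the twisted rank-one families $^{2}B_{2}(q)$ and $^{2}G_{2}(q)$, where the untwisted rank is $r=1$ makes Theorem \ref{c: fg} relatively weak: one must compare $100\,q$ against $(q^{2}+1)/2$ or $(q^{3}+1)/2$, respectively, so a somewhat larger threshold on $q$ is needed, and correspondingly more small-$q$ cases must be checked by hand. For those small cases, the finer class-count estimates in \cite{FG1} (or the exact values from the known character tables) are sharp enough to conclude.
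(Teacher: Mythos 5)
Your proposal follows essentially the same route as the paper: combine the bound $k(G)\leq 100q^r$ of Theorem \ref{c: fg} with the values of $P(S)\leq |G:M|$ from \cite{GMPS} to reduce to a finite list of small groups, check those directly via \cite{ATLAS} and \cite{GAP}, and treat the very twisted rank-one families with exact class numbers. One caveat: the exponent $r$ in Theorem \ref{c: fg} is the \emph{untwisted} rank, so for ${}^{2}B_{2}(q)$ one has $r=2$ and $100q^{2}$ never beats $\frac12(q^{2}+1)$ for any $q$; the Suzuki family therefore genuinely requires the exact value $k({}^{2}B_{2}(q))=q+3$ (your stated fallback, and precisely what the paper does), rather than merely a larger threshold on $q$.
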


\begin{proof}
We use the values for $P(S)$ given in \cite{GMPS}, as well as the fact that $k(G)\leq 100q^r$ (from Theorem~\ref{c: fg}). 

If $S$ is not a Suzuki group, then the value of $P(S)$ given in \cite{GMPS}  rules out all groups except $G_2(3)$, $G_2(4)$, $G_2(5)$, ${^3\!D_4}(2)$, $^2\!F_4(2)'$ and $ {^2\!G_2}(3^3)$. For these groups, we can verify $k(G)<P(S)/2$ using the ATLAS \cite{ATLAS}.

If $S$ is a Suzuki group, then \cite{suzuki} tells us that $k(S)=q+3$, and Lemma~\ref{l: sub conj} implies that $k(G)$ is at most $(q+3)f$, where $q=2^f$. On the other hand, $P(S)=q^2+1$ by \cite{GMPS}. Then $(q+3)f \geq \frac12 (q^2+1)$ if and only if $q=8$ (recall that $f$ is odd and $f\geq 3$). But if $q=8$, \cite{ATLAS} tells us that $S.3$ has 17 conjugacy classes and this case, too, is excluded.
\end{proof}

Next we deal with the case in which $G$ has socle $\PSL_d(q)$.

\begin{lemma}\label{l: psl}
Let $G$ be almost simple with socle $S\cong \PSL_d(q)$. Let $M$ be a core-free maximal subgroup of $G$, and write $n=|G:M|$. If $k(G)\geq \frac{n}{2}$, then one of the following holds.
\begin{enumerate}
    \item $G$ and $n$ are listed in Table~\ref{tab: psl ex}.
    \item $G\leq \PGammaL_d(q)$ and $M$ stabilizes a $1$-dimensional or a $(d-1)$-dimensional subspace of $\F_q^d$, thus $n=\frac{q^d-1}{q-1}$. 
\end{enumerate}
\end{lemma}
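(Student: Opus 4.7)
The plan is to mimic the argument of Lemma \ref{l: alternating}, using Theorem \ref{c: fg} in place of the partition bound: since $\PSL_d(q)$ has untwisted rank $r = d-1$, we have $k(G) \le 100 q^{d-1}$. The goal is then to show that, apart from a finite list of small exceptions, every core-free maximal $M \le G$ either stabilizes a $1$- or $(d-1)$-dimensional subspace of $\F_q^d$ (which forces $G \le \PGammaL_d(q)$ and puts us in case (2) with $n = (q^d-1)/(q-1)$), or else $|G:M| \ge 200 q^{d-1}$, so that $k(G) < n/2$.

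First I would stratify the maximal subgroups of $G$ via Aschbacher's theorem and the description in Kleidman--Liebeck. When $G \le \PGammaL_d(q)$, the class $\calC_1$ consists of stabilizers of $k$-subspaces, of index equal to the Gaussian binomial $\binom{d}{k}_q$: this equals $(q^d-1)/(q-1)$ exactly when $k \in \{1, d-1\}$, and otherwise satisfies $\binom{d}{k}_q \ge q^{k(d-k)} \ge q^{2(d-2)}$, which dominates $200 q^{d-1}$ once $d \ge 4$. When $G \not\le \PGammaL_d(q)$, the duality automorphism is present and the would-be subspace stabilizers fuse into pair-stabilizers, whose index is again much larger than $200q^{d-1}$. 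For the remaining Aschbacher classes $\calC_2,\dots,\calC_8$ and the almost-simple class $\mathcal{S}$, standard order bounds of Liebeck type give $|M| \le q^{3d}$ or better, producing $|G:M|$ comfortably larger than $200 q^{d-1}$ as soon as $d$ and $q$ exceed explicit absolute constants.

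This reduces the problem to a finite list of small $(d,q)$ whose maximal subgroups, indices and class numbers can be read off from \cite{ATLAS} or computed in \cite{GAP}; the cases satisfying $k(G) \ge n/2$ that do not already fall under (2) are collected into Table \ref{tab: psl ex}. The main obstacle is the low-rank regime, especially $d=2$ (where the $1$- and $(d-1)$-space actions coincide and a zoo of extra maximal subgroups $D_{2(q\pm 1)}, A_4, S_4, A_5$ and subfield groups appear) and $d=3$. Here $k(\PSL_2(q))$ is approximately $q/2$ while $n = q+1$, so $k(G)/n$ sits near $\tfrac12$ and careful use of the exact class-count formulas, together with the fact that non-parabolic maximal subgroups have index $\gtrsim q(q-1)/2$, is required to separate the genuine exceptions in Table \ref{tab: psl ex} from the action in (2).
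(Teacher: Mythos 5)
Your proposal is correct in substance and follows the same overall architecture as the paper: bound $k(G)$ above by $O(q^{d-1})$ via Theorem \ref{c: fg}, show every primitive action other than the $1$-space/$(d-1)$-space action has degree much larger than $q^{d-1}$, and finish the small cases by computer. The one genuine difference is where the degree lower bound comes from. The paper does not stratify the maximal subgroups by Aschbacher class itself; it cites Kantor's classification \cite{kantor} of the primitive representations of classical groups of small degree as a black box, which immediately reduces to $d\leq 4$, a short list of exceptional $(d,q)$, reducible $H=M\cap\PGammaL_d(q)$, or $H$ normalizing $\PSp_d(q)$. Your route re-derives that reduction from Aschbacher's theorem plus order bounds on maximal subgroups; this is more self-contained but longer, and you should be careful with one claim: the bound $|M|\leq q^{3d}$ is Liebeck's bound for the class $\mathcal{S}$ only, not for $\calC_2,\dots,\calC_8$ (e.g.\ $\calC_8$ contains $\PSp_d(q)$ of order about $q^{d(d+1)/2}$, and $\calC_5$ contains subfield subgroups of order about $q^{(d^2-1)/2}$). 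These classes still have index far exceeding $200q^{d-1}$, but that has to be checked class by class rather than deduced from a single $q^{3d}$ bound --- which is essentially what \cite{kantor} packages up. Two further points where the paper is more careful than your sketch: when $G\nleq\PGammaL_d(q)$ the flag/antiflag stabilizers are novelties with $n\geq q^{2d-3}$ and must be excluded explicitly (you note the fusion but should record the index bound); and the crude bound $100q^{d-1}$ is not sharp enough to make the finite check feasible, so the paper switches to $k(\PSL_d(q))\leq 2.5q^{d-1}$ from \cite{FG1} and to Macdonald's exact class numbers \cite{macdonald} for $d\leq 4$, exactly as you anticipate for $d=2,3$.
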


\begin{table}
    \centering
    \begin{tabular}{ccc}
   \hline\noalign{\smallskip}
$G$ & $n$ & $k(G)$  \\
         \noalign{\smallskip}\hline\noalign{\smallskip}
$\SL_2(4)=A_5$ & 6,10 & 5 \\
$\SL_2(4).2=S_5$ & 6,10 & 7 \\
$\PSL_2(5)=A_5$ & 5,10 & 5 \\
$\PGL_2(5)=S_5$ & 5,10 & 7 \\
$\PSL_2(7)$ & 7,7 & 6 \\
$\PSL_2(9)=A_6$ & 6,6 & 7 \\
$\PSL_2(9).2=S_6$ & 6,6,15,15 & 11 \\
$\PSL_2(11)$ & 11,11 & 8 \\
$\SL_3(2)$ & 8 & 6\\
$\SL_3(2).2$ & 8 & 9\\
$\SL_4(2)=A_8$  & 8, 28 & 14\\
$\SL_4(2).2=S_8$  & 8,28,35 & 22\\
          \noalign{\smallskip}\hline\noalign{\medskip}
    \end{tabular}
    \caption{Faithful primitive permutation representations of degree $n$ for almost simple groups $G$ with socle $\PSL_d(q)$ such that $k(G)\geq \frac n2$, and the action is not isomorphic to an action in (B) (see the remark after the statement of Lemma \ref{l: psl}).}
    \label{tab: psl ex}
\end{table}

Note that, in Table \ref{tab: psl ex}, in order to ensure that the lemma is true, $n=6$ appears for $G=\SL_2(4)$, but not for $G=\PSL_2(5)$ (even though $\SL_2(4)\cong \PSL_2(5)$). Similarly, $n=5$ appears for $\PSL_2(5)$, but not for $\SL_2(4)$. Similar considerations apply for the isomorphic groups $\PSL_2(7)$ and $\SL_3(2)$.

\begin{proof}
In this proof we use \cite{kantor}. The main theorem of this paper, together with Theorem \ref{c: fg}, implies that, if $k(G)\geq n/2$, then either $d\leq 4$, or
\begin{equation}\label{e: exc}
(d,q)\in\{(5,2), (5,4), (5,8), (6,2), (7,2)\},
\end{equation}
or $H:=M\cap \PGammaL_d(q)$ is reducible, or $H$ normalizes $\PSp_d(q)$. 

Assume first that $d\geq 5$. This rules out the case in which $H$ normalizes $\PSp_d(q)$. Let us now consider the case where $H$ is reducible, stabilizing a subspace of dimension $m$.

Assume first that $2\leq m \leq d-2$. Then $n=|G:M|>q^{m(d-m)}\geq q^{2d-4}$. If $\frac{n}{2}\leq 100q^\ell=100q^{d-1}$, then we get $q^{d-3}<200$. We want to whittle down the possibilities, as follows. \cite[Proposition 3.6]{FG1} states that $k(\PSL_d(q))\leq 2.5q^{d-1}$. This, together with the knowledge of $|\Out(S)|$ and Lemma \ref{l: sub conj}, reduces easily to the cases $(d,q)=(5,2), (5,3), (5,4), (6,2)$. The same argument and \cite{kantor} rules out the cases $(d,q)=(5,8),(7,2)$ in \eqref{e: exc}. We can deal with the remaining cases with GAP \cite{GAP}.

Assume now that $m\in\{1, d-1\}$. The case in which $G\leq \PGammaL_d(q)$ appears in item (2) of the statement. If $G\nleq \PGammaL_d(q)$, then $M$ is a novelty and $|G:M|\geq q^{2d-3}$, and the GAP calculation from the previous paragraph rules out all possibilities.

Let us turn, then, to study what happens when $d\in\{2,3,4\}$. We make use of the counts given in \cite{macdonald}.

When $d=2$, \cite{macdonald} implies that
\[
k(\PSL_2(q))=\frac{1}{(q-1,2)}\big(q+4(q-1,2)-3\big) \textrm{ and } k(\PGL_2(q)) = q+(2,q-1).
\]
We use this in combination with the explicit list of maximal subgroups in $\PSL_2(q)$ to conclude that either
\begin{enumerate}
    \item $q\leq 11$; or
    \item $q=16$ and $M$ is the normalizer of a torus, or a subfield subgroup such that $M\cap S\cong \PGL_2(\sqrt{q})$; or
    \item $q\in\{ 25, 49, 81, 64, 256\}$ and $M$ is a subfield subgroup such that $M\cap S\cong \PGL_2(\sqrt{q})$. 
\end{enumerate}
Using \cite{GAP} we get the possibilities in Table~\ref{tab: psl ex}.

Next assume that $d=3$. If $q$ is odd, then using \cite{macdonald} we see that $k(\PSL_3(q))\leq q^2+q$, and this, along with \cite{kantor}, allows us to conclude that $q\leq 9$. These possibilities can all be excluded using \cite{ATLAS}. If $q$ is even, then \cite{macdonald} implies that $k(G)\le 2f(q^2+q+10)$ where $q=2^f$. Using the list of subgroups in \cite{BHR} this is enough to conclude that $q\leq 16$. Now \cite{GAP} excludes the remainder.

Finally, assume that $d=4$. If $q$ is odd, then \cite{macdonald} implies that $k(G)\le 2f(q^3+q^2+5q+21)$ where $q=p^f$. We use \cite{kantor} to conclude that $q=3$. This final case is ruled out with \cite{ATLAS}. If $q$ is even, then \cite{macdonald} implies that $k(\SL_4(q))=q^3+q^2+q$ and, again, we use the list of subgroups in \cite{BHR} to conclude that $q\leq 16$.  Now \cite{GAP, BHR, ATLAS} rule out all except the listed exceptions for $q=2$.
\end{proof}

Finally we deal with almost simple classical groups with socle $S$  not isomorphic to $\PSL_d(q)$. Note that to deal with this class of groups it is sufficient to consider $S=\PSU_d(q)$ with $d\geq 3$ and $(d,q)\neq (3,2)$; $S=\PSp_d(q)$, with $d\geq 4$ and $(d,q)\neq (4,2)$; and $S=\POmega_d^\varepsilon(q)$ with $d\geq 7$.

\begin{lemma}\label{l: classical}
Let $G$ be almost simple with classical socle $S$, $S\not\cong \PSL_d(q)$. 
Let $M$ be a core-free maximal subgroup of $G$, and write $n=|G:M|$. If $k(G)\geq \frac{n}{2}$, then $G$ and $n$ are listed in Table~\ref{tab: as ex}.
\end{lemma}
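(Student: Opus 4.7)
The plan is to imitate the strategy used in Lemma~\ref{l: psl}: combine Theorem~\ref{c: fg}, which gives $k(G) \leq 100 q^r$ with $r$ the untwisted rank of $S$, with the lower bounds for $P(S)$ recorded in \cite{GMPS}, in order to reduce to a manageable finite list of pairs $(S,M)$ which can then be checked by hand with \cite{ATLAS, BHR, GAP}.

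First I would split the argument by socle type: $S=\PSU_d(q)$ with $d\geq 3$ and $(d,q)\neq (3,2)$; $S=\PSp_d(q)$ with $d\geq 4$ and $(d,q)\neq (4,2)$; and $S=\POmega_d^\varepsilon(q)$ with $d\geq 7$. In each family, for $d$ larger than an explicit small bound (roughly $d\geq 10$), the degrees $P(S)$ tabulated in \cite{GMPS} grow like $q^{2r-O(1)}$ and already exceed $200q^r$; combined with $k(G)\leq 100q^r$ from Theorem~\ref{c: fg}, this forces $k(G)<P(S)/2\leq n/2$, contradicting the hypothesis. Hence we may assume that $d$ is bounded by a small explicit constant, and the problem becomes finite in $d$.

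For the remaining small ranks, I would use the Aschbacher--Kleidman--Liebeck description of maximal subgroups together with \cite{BHR} to enumerate classes $\mathcal{C}_1$--$\mathcal{C}_9$ that can produce a subgroup of index at most $200q^r$. Crude index estimates kill most classes: for example, a stabilizer in $\mathcal{C}_1$ of a nondegenerate $m$-subspace with $2\leq m\leq d-2$, or a field-extension subgroup in $\mathcal{C}_3$, has index growing faster than $q^r$. The survivors are the stabilizers of singular $1$-spaces or hyperplanes, the stabilizers of nondegenerate $1$- or $2$-subspaces, subfield subgroups in $\mathcal{C}_5$, and a handful of $\mathcal{C}_9$ (almost simple) subgroups. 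On each of these candidates, I would sharpen the estimate on $k(S)$ using the explicit counts in \cite[Section~3]{FG1} and the known values of $|\Out(S)|$ (via Lemma~\ref{l: sub conj}) to further restrict $q$ to a finite list, and conclude by a direct check in GAP/ATLAS which yields exactly the entries of Table~\ref{tab: as ex}.

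The main obstacle will be the sheer number of Aschbacher classes to scrutinise for each of the three families, and the genuinely exceptional small-rank cases such as $\POmega_8^+(q)$, where triality introduces novelty subgroups and produces distinct actions that are nonetheless equivalent via an outer automorphism. The delicate bookkeeping step is to count actions up to equivalence correctly, particularly for $\PSp_4(q)$ in characteristic $2$ and $\POmega_8^\pm(q)$, so that each line of Table~\ref{tab: as ex} records exactly one representative per equivalence class. Apart from this case distinction, the remainder of the argument is a routine, if lengthy, refinement of the reasoning used for $\PSL_d(q)$.
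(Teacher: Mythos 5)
Your overall strategy---combine the Fulman--Guralnick bound $k(G)\leq 100q^r$ of Theorem~\ref{c: fg} with the minimal degrees $P(S)$ from \cite{GMPS} to reduce to bounded rank, then sharpen and finish by computer---is the same as the paper's, and it would work. But there is one structural difference worth pointing out: your middle step, an Aschbacher-class enumeration via \cite{BHR} of all maximal subgroups of index at most $200q^r$ in the surviving small-rank groups, is not needed and the paper does not perform it. Since $n=|G:M|\geq P(S)$ for \emph{every} core-free maximal subgroup $M$, it suffices throughout to prove the single inequality $k(G)<P(S)/2$; the identity of $M$ never enters. The paper therefore just replaces the generic bound on $k(S)$ by the exact counts of Macdonald \cite{macdonald} for $\PSU_3$, $\PSU_4$ and of Wall/Enomoto \cite{Wall,enomoto} for $\PSp_4$, compares again with $P(S)$ to pin $q$ to a short list, and only then checks the finitely many remaining groups directly in GAP/ATLAS/Magma (where the degrees in Table~\ref{tab: as ex} emerge from the computation, not from a prior classification of subgroups). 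This sidesteps entirely the triality/equivalence bookkeeping for $\POmega_8^\pm(q)$ and $\PSp_4(2^f)$ that you flag as the main obstacle. One genuine snag your plan would hit: for $G=\PGammaU_4(16)$ and $\PGammaU_3(16)$ the crude bound $k(G)\leq|G{:}S|\,k(S)$ is \emph{not} below $P(S)/2$ and a brute-force class count is expensive; the paper needs the refinement of Lemma~\ref{l: improvementclasses}, exhibiting a split torus of order $17^3$ whose $284$ nontrivial $\SU_4(16)$-classes are fused nontrivially by the field automorphism, to push $k(G)$ under the threshold. Your proposal gives no mechanism for these two cases.
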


\begin{table}
    \centering
    \begin{tabular}{ccc}
    \hline\noalign{\smallskip}
          $G$ & $n$ & $k(G)$  \\
          \noalign{\smallskip}\hline\noalign{\smallskip}
$\SOr_8^-(2)$ & 119 & 60 \\
$\Sp_8(2)$ & 120,136 & 81 \\
$\SOr_8^+(2)$ & 120 & 67 \\
$\Sp_6(2)$ & 28, 36 & 30 \\
$\PSp_4(3)=\PSU_4(2)$ & 27,36,40,40 & 20 \\
$\PSp_4(3).2=\PSU_4(2).2$ & 27,36,40,40,45 & 25 \\
$\PSU_4(3).(2\times 2)$ & 112 & 59 \\
$\PGammaU_4(3)$ & 112 & 61 \\
$\SU_3(3)$ & 28 & 14 \\
$\SU_3(3).2$ & 28 & 16 \\
          \noalign{\smallskip}\hline\noalign{\medskip}
         
    \end{tabular}
    \caption{Faithful primitive permutation representations of degree $n$ for almost simple classical groups $G$ with socle $S\not\cong\PSL_d(q)$ 
    such that $k(G)\geq \frac n2$.}
    \label{tab: as ex}
\end{table}


\begin{proof}
In order to exclude some potential examples, our basic strategy will be to use the bound $k(G)\leq |G:S|k(S)$ from Lemma \ref{l: sub conj}, and try to show that this is smaller than $P(S)/2$. In order to bound $k(S)$, we will use the results in \cite{FG1} for specific families, as follows.

\textbf{Suppose that $S\cong \PSU_d(q)$.} In this case \cite[Proposition 3.10]{FG1} implies that $k(S)\leq 8.26q^{d-1}$, and we use the values for $P(S)$ given in \cite{GMPS} to obtain that either $S$ is in
    \[
    \{\PSU_5(2), \PSU_6(2), \PSU_7(2), \PSU_5(3), \PSU_5(4)\}
    \]
    or else $d\leq4$. Groups with the five possible socles with $d>4$ can be ruled out using \cite{GAP}.
    
If $S=\PSU_4(q)$, then \cite{macdonald} implies that
\[
k(S)=\begin{cases}
\frac14(q^3+q^2+7q+23), & q\equiv 3\pmod 4; \\
\frac12(q^3+q^2+7q+9), & q\equiv 1\pmod 4; \\
q^3+q^2+3q+2, & q\equiv 0 \pmod 2.
\end{cases}
\]
Thus, in any case, $k(G)\leq 2f(q^3+q^2+7q+23)$ where $q=p^f$. Since $P(S)=(q+1)(q^3+1)$, by \cite{GMPS}, we conclude that $q\in\{2,3,4,5,8,16\}$.  If $q\leq 5$, then \cite{GAP} yields the listed cases. If $q\in\{8,16\}$, then $G=\PGammaU_4(q)$. The case $q=8$ is eliminated by \cite{magma}; we now consider the case $q=16$. 
We want to apply Lemma \ref{l: improvementclasses} with $G=\PGammaU_4(16)$ and $N=\SU_4(16)$. Consider the split torus $T$ of $N$ of order $17^3$, which intersects $284$ nontrivial $N$-classes.
By looking at eigenvalues, we see that none of these classes is fixed by the standard field automorphism $\sigma$ of order $8$ normalizing $T$. We deduce from Lemma \ref{l: improvementclasses} that $k(G)\leq 8(q^3+q^2+3q+2 -284/2)=34080$, which is enough to conclude that 
$k(G)<P(S)/2$.

If $G=\PSU_3(q)$, then \cite{macdonald} implies that
\[
k(S) = \begin{cases}
q^2+q+2, & q\not\equiv 2 \pmod 3; \\
\frac13 (q^2+q+12), & q\equiv 2\pmod 3.
\end{cases}
\]
Thus, in any case, $k(G)\leq 2f(q^2+q+12)$ where $q=p^f$. Since $P(S)=q^3+1$ if $q\neq 5$, by \cite{GMPS}, we conclude that $q\leq 9$ or $G=\PGammaU_3(16)$. For $q\leq 9$, we obtain the listed examples using \cite{GAP} and \cite{ATLAS}. For $G=\PGammaU_3(16)$, the same argument used for the case $\PGammaU_4(16)$ works.


\textbf{Suppose that $S\cong \PSp_d(q)$.} If $d\geq 6$, then we use \cite[Theorems 3.12 and 3.13]{FG1} along with the values for $P(S)$ given in \cite{GMPS} to conclude that $S$ is one of the following:
    \[
    \{\PSp_6(3), \Sp_6(2), \Sp_8(2), \Sp_{10}(2)\}.
    \]
We use \cite{GAP} and \cite{ATLAS} to check these cases and obtain the listed examples.

If $S=\PSp_4(q)$, then we use \cite{Wall} (for $q$ odd) and \cite{enomoto} (for $q$ even) to establish that
\[
k(\Sp_4(q))=\begin{cases} 
q^2+5q+10, & q \textrm{ odd;} \\
q^2+2q+3, & q \textrm{ even.}
\end{cases}.
\]
This, combined with \cite{GMPS}, implies that $q\leq 9$. Now \cite{GAP} and \cite{ATLAS} yield the listed examples.

\textbf{Suppose that $S\cong \POmega_{2\ell+1}(q)$.} Here we assume that $\ell \geq 3$ and that $q$ is odd. Now \cite[Theorem 3.19]{FG1} along with the values for $P(S)$ given in \cite{GMPS} imply that $S=\POmega_7(3)$. This final case can be excluded using \cite{ATLAS}.

\textbf{Suppose that $S\cong \POmega^\pm _{2\ell}(q)$ with $q$ odd.} We make use of \cite[Theorems 3.16 and 3.18]{FG1} along with the values for $P(S)$ given in \cite{GMPS} to obtain that
\[
S\in \{\POmega_{10}^\pm(3), \POmega_8^\pm(3), \POmega_8^+(5), \POmega_8^+(7)\}.
\]
In $\POmega_8^+(5)$ and $\POmega_8^+(7)$, the outer automorphism group is $S_4$, and a subgroup of $S_4$ has at most $5$ conjugacy classes, therefore by Lemma \ref{l: sub conj} we get $k(G)\leq 5k(S)$, which is enough to rule out these possibilities.

We use \cite{magma} and \cite{GAP} to rule out the cases where $S=\POmega_{10}^\pm(3)$ or $\POmega_8^\pm(3)$.

\textbf{Suppose that $S\cong \Omega^\pm _{2\ell}(q)$ with $q$ even.} We make use of \cite[Theorem 3.22]{FG1} along with the values for $P(S)$ given in \cite{GMPS} to obtain that
\[
S\in \{\Omega_{10}^\pm(2), \Omega_8^\pm(2), \Omega_8^+(4)\}.
\]
We use \cite{ATLAS} for the groups with $q=2$, and we get the listed examples. We can rule out $\Omega_8^+(4)$ using \cite{magma}.
\end{proof}

\subsection{Proof of Theorem \ref{t: main_almost_simple}}
\label{subsec: proof_thm_almost_simple}
Let $G$ be an almost simple primitive permutation group of degree $n$. Putting together Lemmas \ref{l: sporadic}, \ref{l: alternating}, \ref{l: exceptional}, \ref{l: psl} and \ref{l: classical}, we get that either $k(G)<n/2$, or we are in case (2) of Theorem \ref{t: main_almost_simple} (regarding Table \ref{tab: final_exceptions}, recall Remark \ref{r: clarifications}(iii)).

Note that, if the action of $G$ is isomorphic to an action in (B), then $k(G)<100n$ follows immediately from Theorem \ref{c: fg}. 

It remains to prove the asymptotic statement, that is, either $k(G)=O(n^{3/4})$, or the action of $G$ is isomorphic to an action in (A) or (B). We assume that this latter condition does not hold, and we want to show $k(G)=O(n^{3/4})$.

We may assume that $G$ is sufficiently large along the proof. Let $M$ be the stabilizer of a point in the action of $G$ on $n$ points; in particular $|G:M|=n$. Write $S=\Soc(G)$.

\textbf{Assume first} that $S\cong A_d$, and assume that $M$ is transitive on $d$ points; we will show that $k(G)=n^{o(1)}$ as $d$ tends to infinity. By Lemma \ref{l: partition} (or by the Hardy--Ramanujan asymptotic formula), we have $k(G)=O(1)^{\sqrt d}$. On the other hand, by Lemmas \ref{l: factorial} and \ref{l: primitive}, if $M$ is primitive on $d$ points then $n\geq (d/O(1))^d$; and by \eqref{eq:symm} in the proof of Lemma \ref{l: alternating}, if $M$ is imprimitive then $n\geq c^d$ for some constant $c$. Therefore $k(G)=n^{o(1)}$ if $S\cong A_d$.

\textbf{Assume now} that $S\cong \PSL_d(q)$. We have $k(G)=O(q^{d-1})$ by Theorem \ref{c: fg}. If $H:=M\cap \PGammaL_d(q)$ is reducible in the action on $\F_q^d$, one possibility is that it stabilizes a $k$-space for some $2\leq k \leq d-2$, and so $n> q^{2d-4}$. If $d\rightarrow \infty$, we see that $k(G)=o(n^{3/4})$; and if $d$ is bounded, we see that $k(G)=O(n^{3/4})$ (we actually have $k(G)=o(n^{3/4})$ as $q\rightarrow \infty$ except for the case $(d,k)=(4,2)$). The remaining possibility is that $G\nleq \PGammaL_d(q)$ and $M$ is the stabilizer of a flag (pair of incident point-hyperplane) or antiflag (pair of complementary point-hyperplane). But in this case $n>q^{2d-3}$, and the previous computation is sufficient for $d\geq 4$; and for $d=3$, $k(G)=O(n^{2/3})$.

If $d=2m\geq 4$ and $H$ normalizes $\PSp_{2m}(q)$, then
\[
n\geq \frac{1}{(m,q-1)}\cdot q^{m^2-m} (q^3-1)(q^5-1)\cdots (q^{2m-1}-1)
\]
and we can easily check that $k(G)=o(n^{3/4})$.

If now $H$ is irreducible and does not normalize $\PSp_d(q)$, we can apply the main theorem of \cite{kantor}. We see easily that $k(G)=o(n^{3/4})$ as $d\rightarrow \infty$. If $d$ is bounded instead, then we can assume that $q$ is large and in particular \cite{kantor} implies that $n\geq q^{(d-1)(d-2)/2}$, which proves $k(G)=o(n^{3/4})$ in case $d\geq 5$. In case $d\leq 4$, we can use the list of maximal subgroups of $\PSL_d(q)$ given in \cite{BHR} in order to prove $k(G)=o(n^{3/4})$ (if $H$ is irreducible, $n\gg q^{3/2}$ for $d=2$; $n\gg q^4$ for $d=3$; and $n\gg q^5$ for $d=4$).

\textbf{Assume finally} that $S$ is a group of Lie type and that $S\not\cong\PSL_d(q)$. In this case we want to show $k(G) = O(P(S)^{3/4})$, which implies the statement, since $P(S)\leq n$. This can be checked combining $k(G)=O(q^r)$ (where $r$ is the untwisted rank of $S$) with the value of $P(S)$ given in \cite{GMPS}. In fact, we get $k(G) = o(P(S)^{3/4})$ unless $S\cong \PSU_4(q)$. (We remark that, in the latter case, $P(S)$ is equal to the number of totally singular $2$-subspaces of $\F_{q^2}^4$; we also use \cite{BHR} in order to see that $n\gg q^5$ for every other primitive action of $G$.)

This concludes the proof of Theorem \ref{t: main_almost_simple}.

\begin{remark}
\label{r: littleo(n34)}
In Theorem \ref{t: main_almost_simple}(1), we actually showed that $k(G)=o(n^{3/4})$ as $n\rightarrow \infty$ unless $S\cong \PSL_4(q)$ and $G$ acts on the set of $2$-subspaces of $\F_q^4$, or $S\cong \PSU_4(q)$ and $G$ acts on the set of totally singular $2$-subspaces of $\F_{q^2}^4$. In these cases, we have $n\sim q^4$ and $k(S)\asymp q^3$, therefore $k(S)\asymp n^{3/4}$. (Recall that $f\asymp g$ means that $c_1f\leq g\leq c_2f$ for positive constants $c_1$ and $c_2$.) 
\end{remark}

\section{The general case}
\label{sec:general case}

In this section we prove Theorem \ref{t: conjugacyclassesprimitive}. We first prove a lemma.

\begin{lemma}
\label{l: 4k(A)square}
Let $G$ be a finite almost simple group with socle $S$. Then either $S\cong A_5, A_6, \PSL_2(7), \PSL_2(11)$, or $4\cdot k(G)^2 < |S|$.  Moreover, $k(G)^3 =O(|S|)$.
\end{lemma}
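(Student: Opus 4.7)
The plan is to split along the Classification of Finite Simple Groups and in each case bound $k(G)$ via $k(G)\leq|\Out(S)|\cdot k(S)$ (Lemma~\ref{l: sub conj}), then compare with $|S|$. The first assertion reduces to showing $16\,|\Out(S)|^2 k(S)^2<|S|$ outside the four exceptional socles; the moreover statement requires $k(S)^3|\Out(S)|^3=O(|S|)$, so the extremal family is identified by the inequality $3r\leq \dim S$ for groups of Lie type.

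For sporadic $S$, there are only finitely many almost simple $G$, and both $k(G)$ and $|S|$ are read from the ATLAS, giving $4k(G)^2<|S|$ by direct inspection. For $S=A_d$ with $d\geq 5$, I would use $k(G)\leq 2k(S_d)=2p(d)$ together with Lemmas~\ref{l: partition} and~\ref{l: factorial} to establish $16p(d)^2<d!/2$ for all $d$ above a small threshold (say $d\geq 7$), since $p(d)^2=e^{O(\sqrt d)}$ is dominated by $d!$; a direct check for $d\in\{5,6\}$ then isolates $A_5$ and $A_6$ as the only failures among alternating socles.

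For Lie type socles, Theorem~\ref{c: fg} gives $k(G)\leq 100 q^r$ with $r$ the untwisted rank, so $4k(G)^2\leq 4\cdot 10^4 q^{2r}$. Using the standard order formulas, $|S|$ is a polynomial in $q$ of degree $\dim S$, and $\dim S\geq 2r+1$ for every family, with equality only for $\PSL_2(q)$ (where $\dim S=3$ and $r=1$). Thus for $q$ sufficiently large (depending only on the family) the inequality $4k(G)^2<|S|$ holds comfortably, and the sharp case is $\PSL_2$. To handle $\PSL_2(q)$ and the remaining small-$q$ classical and exceptional cases I would apply the precise counts of \cite{macdonald}, \cite{Wall} and \cite{enomoto} already used in Lemmas~\ref{l: psl} and~\ref{l: classical} — in particular $k(\PSL_2(q))\leq(q+5)/2$ for $q$ odd, $k(\PSL_2(q))=q+1$ for $q$ even, and $k(\PGL_2(q))=q+(2,q-1)$ — and finish the finitely many remaining socles by direct verification in GAP/ATLAS, which produces exactly the two extra exceptions $\PSL_2(7)$ and $\PSL_2(11)$.

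The moreover bound $k(G)^3=O(|S|)$ follows from the same estimates: the crude bound $k(G)\leq 100q^r$ gives $k(G)^3\leq 10^6 q^{3r}$, which is $O(|S|)$ precisely because $3r\leq\dim S$ in every Lie-type family, with equality only for $\PSL_2(q)$ (so $k(G)\asymp q$ and $|S|\asymp q^3$ there), and $k(G)^3=o(|S|)$ in every other family. For alternating and sporadic socles $k(G)^3$ is even negligibly small compared to $|S|$. The main obstacle is the tightness of the $\PSL_2(q)$ analysis for small $q$: the lemma essentially asserts that the list of exceptions in this series stops at $q=11$ (for instance $\PSL_2(11)$ barely fails with $|S|=660$ and $k(\PGL_2(11))=13$), so this boundary case must be pinned down by explicit computation rather than by a uniform estimate in $q$.
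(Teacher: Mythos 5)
Your proposal follows essentially the same route as the paper: split by CFSG, bound $k(G)\leq|\Out(S)|\cdot k(S)$, use the Fulman--Guralnick bound $k(G)\leq 100q^r$ for Lie type together with the exact counts of Macdonald for the tight low-rank families, and finish the small cases by ATLAS/GAP. The paper's proof is the same in substance (it also offers an alternative for classical and exceptional socles via $k(G)<P(S)/2$ combined with $P(S)\leq|S|^{1/2}$, which you do not use, but your FG-based route is the one the paper itself recommends for classical groups).

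Two concrete points in your write-up need patching. First, the blanket claim that $|S|$ has degree $\dim S$ in $q$ and that $3r\leq\dim S$ handles all Lie-type families breaks down for the very twisted groups under the paper's normalisation (Theorem~\ref{c: fg} takes $q$ to be the field of definition): for $S={}^2B_2(q)$ the untwisted rank is $2$ but $|S|\sim q^5$, so $q^{3r}=q^6$ is \emph{not} $O(|S|)$, and $4\cdot 10^4q^{2r}<|S|$ forces $q>4\cdot 10^4$. You must invoke the exact count $k(S)=q+3$ (as in the proof of Lemma~\ref{l: exceptional}) or the bound $k(G)<P(S)/2\leq|S|^{1/2}/2$ to dispose of the Suzuki family; your cited sources (\cite{macdonald}, \cite{Wall}, \cite{enomoto}) do not cover it. Second, your alternating threshold is slightly off: with the crude bound $k(G)\leq 2p(d)$ one gets $16p(7)^2=3600>2520=7!/2$, so $d=7$ is not settled by that inequality. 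The fix is immediate -- for $d\neq 6$ one has $G\in\{A_d,S_d\}$ and hence $k(G)\leq k(S_d)=p(d)$ by Lemma~\ref{l: alt sym}, giving $4p(7)^2=900<2520$ -- but as stated the step fails. Everything else, including the identification of $\PSL_2(7)$ and $\PSL_2(11)$ as the genuine boundary cases (e.g.\ $4k(\PGL_2(11))^2=676>660$), is correct and matches the paper.
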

We note that we actually have $k(G)^3=o(|S|)$ as $|S|\rightarrow \infty$, except for the case $S\cong \PSL_2(q)$.

\begin{proof}
We first prove $4\cdot k(G)^2 < |S|$, with the listed exceptions.

\textbf{Assume first} that $S\cong A_d$. Then Lemmas \ref{l: partition}, \ref{l: factorial} and a straightforward computation imply that it is sufficient to show
\[
3.2\cdot e^{5.2\sqrt d +d} < d^{d+2},
\]
which is true for $d\geq 10$. For $d\leq 9$, direct check gives the exceptions in the statement.

\textbf{Assume now} that $S\cong \PSL_d(q)$. Using the bound $k(S)\leq 2.5 q^{d-1}$ from \cite{FG1}, Lemma \ref{l: sub conj}, and the fact that $|G:S|\leq 2f(d,q-1)$, with $q=p^f$, we see that it is sufficient to show
\[
100f^2(d,q-1)^3 < q^{d(d-1)/2-2d+2} (q^2-1)\cdots (q^d-1).
\]
If $d\geq 4$, we can easily verify that this is true. For $d=3$, \cite{macdonald} tells us that $k(S)\leq q^2+q$. We compute that it is enough to show
\[
16f^2(3,q-1)^3(q+1) < q(q-1)(q^3-1),
\]
which can be verified unless $q=2,4$. The case $q=2$ is in the statement (since $\SL_3(2) \cong \PSL_2(7)$), while the case $q=4$ can be excluded with \cite{GAP}.

If $d=2$, we use the exact value of $k(S)$ (recalled in the proof of Lemma \ref{l: psl}), in order to reduce to the cases $q\leq 16$ or $q=25,27,32,64,81,128,256$. Then we use \cite{GAP} and we get the cases $q=4,5,7,9,11$ in the statement.

\textbf{Assume} that $S$ is classical and that $S\not\cong \PSL_d(q)$. Here one can prove that $4k(G)^2 < |S|$ using the upper bounds for $k(S)$ given in \cite{FG1}. One can also argue as follows (but this is not necessary). 
If $G$ appears in Table \ref{tab: final_exceptions}, we can make a direct check. If $G$ is not in Table \ref{tab: final_exceptions}, then Theorem \ref{t: main_almost_simple} tells us that $k(G)<P_m(G)/2$, where $P_m(G)$ denotes the smallest index of a core-free maximal subgroup of $G$. Now it is known (see \cite[p. 178]{KL}) that $P(S)\leq |S|^{1/2}$. In particular, whenever $P_m(G)=P(S)$, we can immediately conclude $4k(G)^2 < |S|$. Certainly we have $P(S)\leq P_m(G)$. Using the value of $P(S)$ given in \cite{GMPS} (see also \cite{coopersteinminimal}, where an explicit $M$ for which $|S:M|=P(S)$ is given), and consulting \cite{KL}, we deduce that $P_m(G)=P(S)$ unless $S\cong \PSU_3(5)$, $S\cong \Sp_4(q)$ with $q$ even, $S\cong \POmega_8^+(q)$, or $S\cong \POmega^+_{2m}(3)$ with $m\geq 4$. (If $S\cong \PSU_3(5)$, $|S:M|=P(S)$ where $M$ is isomorphic to $A_7$; if $S\cong \POmega^+_{2m}(3)$, $M$ is the stabilizer of a nondegenerate $1$-space.) We can exclude the unitary case with \cite{ATLAS}; in the symplectic case we can use $k(\Sp_4(q))=q^2+2q+3$ (see the proof of Lemma \ref{l: classical}); in the orthogonal cases we can use the bound $k(\POmega_{2m}^+(q))\leq 14q^m$ given in \cite{FG1}. 

\textbf{Assume} that $S$ is exceptional. In the proof of Lemma \ref{l: exceptional} we actually proved $k(G)<P(S)/2$, 
therefore we conclude by the argument of the previous paragraph.

\textbf{Assume finally} that $S$ is sporadic. We use \cite{ATLAS} to conclude $4k(G)^2<|S|$.

It remains to prove the asymptotic statement, that is, $k(G)^3 =O(|S|)$ (and indeed $k(G)^3 =o(|S|)$ if $S\not\cong \PSL_2(q)$). We may assume that $S$ is sufficiently large, and the statement is easy to check, using Lemma \ref{l: partition} and Theorem \ref{c: fg}.
\end{proof}

We need three technical lemmas.

\begin{lemma}
\label{l: kB leq kA}
Assume that $S\cong A_d$, or that $S$ is the socle of some group appearing in Table \ref{tab: final_exceptions}. If $S\leq B\leq A\leq \Aut(S)$, then $k(B)\leq k(A)$, unless $A=\Omega^+_8(2).S_3$.
\end{lemma}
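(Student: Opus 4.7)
The plan is to reduce the statement to a finite case-check, handling the alternating socles and the sporadic/Lie-type socles in Table \ref{tab: final_exceptions} separately.

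First I would treat $S \cong A_d$. For $d \neq 6$ we have $\Aut(S) = S_d$, so any chain $S \leq B \leq A$ collapses to at most the inclusion $A_d \leq S_d$, and the desired inequality is exactly Lemma \ref{l: alt sym}. For $d = 6$ the group $\Aut(A_6) = \PGammaL_2(9)$ has order $1440$ and only five proper overgroups of $A_6$ (namely $S_6$, $\PGL_2(9)$, $M_{10}$, and $\PGammaL_2(9)$), so a direct consultation of the ATLAS character tables confirms the inequality for each intermediate pair $(B,A)$.

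Next I would handle the remaining socles in Table \ref{tab: final_exceptions}. In each case except $S = \PSU_4(3)$ (with $\Out(S) \cong D_8$) and $S = \Omega_8^+(2)$ (with $\Out(S) \cong S_3$), one has $|\Out(S)| \leq 2$, so the chain $S \leq B \leq A \leq \Aut(S)$ contains at most two distinct groups and the inequality reduces to $k(S) \leq k(\Aut(S))$, which can be read off directly from the ATLAS. For $\PSU_4(3)$ and $\Omega_8^+(2)$, the collection of overgroups of $S$ inside $\Aut(S)$ is still very small, so I would enumerate them (either by inspecting the relevant ATLAS page or via a short GAP computation) and compare class counts for every pair $(B,A)$ in the resulting finite lattice.

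The main obstacle is really just bookkeeping for the triality case $S = \Omega_8^+(2)$. Triality fuses a large proportion of the $53$ conjugacy classes of $\Omega_8^+(2)$, so when one passes from an intermediate extension (for example the triality extension $\Omega_8^+(2).3$, or one of the $\Omega_8^+(2).2$ subextensions) up to the full automorphism group $\Omega_8^+(2).S_3$, further fusion can strictly decrease the number of conjugacy classes. This is precisely what produces the stated exception $A = \Omega_8^+(2).S_3$; in every other instance the ATLAS tables certify $k(B) \leq k(A)$, completing the case-check.
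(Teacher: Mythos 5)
Your proposal is correct and follows essentially the same route as the paper, which likewise disposes of $S\cong A_d$ via Lemma \ref{l: alt sym} (with a direct check for $d=6$) and settles the socles of Table \ref{tab: final_exceptions} by a finite consultation of the ATLAS, the triality case $\Omega_8^+(2)$ producing the stated exception. The only blemish is the harmless miscount of the overgroups of $A_6$ in $\PGammaL_2(9)$ (you list four but say five).
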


\begin{proof}
If $S\cong A_d$, the statement follows from Lemma \ref{l: alt sym}, and by direct check in case $d=6$. If $S$ is the socle of some group appearing in Table \ref{tab: final_exceptions}, we use \cite{ATLAS}.
\end{proof}

\begin{lemma}
\label{l: flags}
Assume that $G$ is almost simple with socle $S\cong\PSL_d(q)$, with $d\geq 3$, and let $m$ denote the number of flags (that is, pairs of incident point-hyperplane) in $\F_q^d$. Then, $k(G)<m/2$ and $k(G)=O(m^{2/3})$.
\end{lemma}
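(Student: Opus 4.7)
First, I would compute the flag count explicitly: the number of pairs $(v, H)$ with $v$ a $1$-space contained in a hyperplane $H$ of $\F_q^d$ is
\[
m = \frac{q^d-1}{q-1} \cdot \frac{q^{d-1}-1}{q-1} = \frac{(q^d-1)(q^{d-1}-1)}{(q-1)^2}.
\]
In particular $m \geq q^{2d-3}$, and $m \asymp q^{2d-3}$ as $q \to \infty$.

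For the asymptotic statement $k(G) = O(m^{2/3})$, I would apply Theorem~\ref{c: fg}: since $\PSL_d(q)$ has untwisted rank $d-1$, we obtain $k(G) \leq 100 q^{d-1}$. The hypothesis $d \geq 3$ is precisely the condition $3(d-1) \leq 2(2d-3)$, whence $q^{d-1} \leq (q^{2d-3})^{2/3} \leq m^{2/3}$, giving $k(G) = O(m^{2/3})$. The exponent $2/3$ is sharp at $d = 3$, where $k(G) \asymp q^2$ and $m \asymp q^3$.

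For the strict inequality $k(G) < m/2$, my plan is to combine $k(G) \leq 100 q^{d-1}$ with $m \geq q^{2d-3}$: this reduces the claim to $q^{d-2} \geq 200$. That condition fails only for a finite list of pairs $(d, q)$, namely $d = 3$ with $q \leq 199$, $d = 4$ with $q \leq 13$, $d = 5$ with $q \leq 5$, and a handful with $d \leq 9$ and $q \in \{2, 3\}$. For these residual cases I would use the exact formulas of \cite{macdonald} --- such as $k(\PGL_3(q)) = q^2 + q$ and $k(\SL_4(q)) = q^3 + q^2 + q$ for $q$ even, and the analogous closed-form polynomial expressions in $q$ for the remaining small $d$ --- combined with the out-automorphism bound $|G:S| \leq 2 f (d, q-1)$ where $q = p^f$, via Lemma~\ref{l: sub conj}. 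The very smallest remaining $(d, q)$ can be verified directly using the ATLAS \cite{ATLAS} or GAP \cite{GAP}.

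The main obstacle is the case $d = 3$ with moderate $q$: here $m \asymp q^3$ while $k(G)$ is already of order $q^2$, so the ratio $m/k(G)$ is only $\asymp q$ and the margin is relatively thin. Pushing the inequality through in this range requires the precise Macdonald formulas together with careful tracking of the outer-automorphism contribution, in the spirit of the $d = 3$ analysis already carried out in Lemma~\ref{l: psl}. For $d \geq 4$ there is abundant slack and the argument becomes routine once Theorem~\ref{c: fg} is in hand.
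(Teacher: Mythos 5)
Your proposal is correct, and the overall strategy (compare the rank bound $k(G)=O(q^{d-1})$ from Theorem~\ref{c: fg} against $m>q^{2d-3}$, then clean up a finite list of small cases) coincides with what the paper does for $d=3$, which is indeed the only genuinely tight case. Where you diverge is the treatment of $d\geq 4$: the paper avoids almost all residual case-checking there by a structural reduction --- if $G\nleq\PGammaL_d(q)$ then $G$ acts primitively on the set of flags itself, so Theorem~\ref{t: main_almost_simple} applies directly; and if $G\leq\PGammaL_d(q)$ then $G$ acts primitively on the $2$-subspaces of $\F_q^d$, whose number is smaller than $m$, so Lemma~\ref{l: psl} (plus an inspection of Table~\ref{tab: psl ex}) gives $k(G)<m/2$ at once. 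Your purely numerical route instead leaves the residual pairs $(d,q)$ with $q^{d-2}\leq 200$ to be handled by Macdonald's formulas and the bound $|G:S|\leq 2f(d,q-1)$; this works (using $k(S)\leq 2.5q^{d-1}$ from \cite{FG1} the condition collapses to $q^{d-2}>10f(d,q-1)$, leaving only a handful of small $q$ for GAP), but it is more checking than the paper needs, whereas the paper's reduction recycles work already done in Lemma~\ref{l: psl}. For $d=3$ your plan and the paper's are essentially identical: both get $q<200$ from $100q^2$ versus $q^3/2$, then use the exact value of $k(\PSL_3(q))$ together with $|G:S|\leq 2f(3,q-1)$ to whittle the list down to a few field sizes verified by computer. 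Your explicit flag count and the observation that $2/3$ is attained exactly at $d=3$ are both correct and match the remark following the lemma.
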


We note that we actually have $k(G)=o(m^{2/3})$ as $m\rightarrow \infty$, except in case $d=3$.

\begin{proof}
We begin with the inequality $k(G)<m/2$. If $G\nleq \PGammaL_d(q)$, then $G$ acts primitively on the set of flags, and the statement follows from Theorem \ref{t: main_almost_simple}. Assume now that $G\leq \PGammaL_d(q)$. Then $G$ acts primitively on the set of $2$-subspaces of $\F_q^d$. It is easy to see that the number of $2$-subspaces is smaller than the number of flags. Assume that $d\geq 4$. Then, by Lemma \ref{l: psl}, either $k(G)<m/2$, or $G$ appears in Table \ref{tab: psl ex}. Examining Table \ref{tab: psl ex}, we see that $k(G)<m/2$ also in the latter case.

We are left with the case $d=3$. We have $k(G)\leq 100q^2$ by Theorem \ref{c: fg}, and moreover $m>q^3$. In particular, if $k(G)\geq m/2$ then $q<200$. We whittle down a bit the possibilities. Write $a=(3,q-1)$. By \cite{macdonald} and Lemma \ref{l: sub conj}, we deduce $k(G)\leq |G:S|\cdot (q^2+q+5a-5)/a < 2|G:S|\cdot q^2$. Therefore, if $q=p^f$, we have $q<8af$. Using $q<200$, we see that we are reduced to the cases $q\leq 27$ and $q=32,64$, which can be checked with \cite{GAP} (if $q\neq 2,4,8,16$, it is enough to show that $4f(q^2+q+5a-5)$ is smaller than $m$, without computing the actual value of $k(G)$).

The asymptotic statement $k(G)=O(m^{2/3})$ can be checked easily using $k(G)=O(q^{d-1})$.
\end{proof}

\begin{lemma}
\label{l: kB < m/2}
Let $A$ be an almost simple primitive group of degree $m$ with socle $S$, and assume that $A$ is not in the possibilities of Theorem \ref{t: main_almost_simple}(2). Let $S\leq B\leq A$. Then, $k(B)<m/2$. Moreover, for every fixed $\alpha > 3/4$, if $S$ is sufficiently large then $k(B)<m^\alpha$.
\end{lemma}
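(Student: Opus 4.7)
The plan is to transfer the bounds $k(A)<m/2$ and $k(A)=O(m^{3/4})$ supplied by Theorem \ref{t: main_almost_simple}(1) from $A$ to every intermediate $B$, exploiting the fact that the sub-lemmas of Section \ref{sec: proof_almost_simple} produce their bounds via socle-dependent quantities such as $|\Out(S)|\cdot k(S)$ or $100q^r$, which do not depend on the specific almost simple overgroup. I would split the proof into cases according to the type of $S$.

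If $S\cong A_d$ or $S$ is the socle of some group appearing in Table \ref{tab: final_exceptions}, then Lemma \ref{l: kB leq kA} directly yields $k(B)\leq k(A)$, so both claims follow. The sole exception $A=\Omega_8^+(2).S_3$ I would handle by direct ATLAS inspection: for each primitive action of $A$ of degree $m$ not falling under Theorem \ref{t: main_almost_simple}(2), and each $B$ with $\Omega_8^+(2)\leq B\leq A$, one verifies $k(B)<m/2$ case by case. If $S$ is sporadic outside the Table, then $|\Out(S)|\leq 2$ forces $B\in\{S,A\}$, and the ATLAS again concludes.

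Finally, suppose $S$ is of Lie type and its socle is not represented in Table \ref{tab: final_exceptions}. Then Theorem \ref{c: fg} applied to $B$ yields $k(B)\leq 100q^r$, where $q,r$ are the field size and untwisted rank of $S$. Inspection of the proofs of Lemmas \ref{l: exceptional}, \ref{l: psl} and \ref{l: classical} reveals that they in fact establish $100q^r<m/2$ (rather than merely $k(A)<m/2$) in every case not listed in Theorem \ref{t: main_almost_simple}(2); applied to $B$, this gives $k(B)<m/2$. For the asymptotic half, Lemma \ref{l: sub conj} gives $k(B)\leq |\Out(S)|\cdot k(S)$, and the analysis of Subsection \ref{subsec: proof_thm_almost_simple} already bounds the right-hand side by $O(m^{3/4})$, whence $k(B)<m^\alpha$ for $|S|$ sufficiently large. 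The principal subtlety is the exceptional case $A=\Omega_8^+(2).S_3$, which requires a direct ATLAS check; the remainder is bookkeeping of bounds already in place.
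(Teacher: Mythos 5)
Your argument has a genuine gap in the Lie-type case, which is the heart of the lemma. The load-bearing claim is that the proofs of Lemmas \ref{l: exceptional}, \ref{l: psl} and \ref{l: classical} ``in fact establish $100q^r<m/2$'' in every non-exceptional case, so that the socle-only bound $k(B)\leq 100q^r$ from Theorem \ref{c: fg} finishes the job. This is false. A large number of the borderline cases in those lemmas are settled by computing $k(A)$ exactly (via Macdonald's formulas, the ATLAS, or GAP) and comparing with $m/2$; in those cases $100q^r$ is nowhere near $m/2$. For instance, for $S=\PSL_2(13)$ acting on the cosets of a dihedral subgroup we have $m=78$ and $k(S)=9<39$, but $100q^r=1300$; similar failures occur for $\PSU_5(2)$ (where $P(S)=165$ but $100q^r=1600$) and throughout the small classical cases eliminated by GAP. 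Once the uniform bound collapses, nothing in your argument controls $k(B)$ for a proper intermediate subgroup $S\leq B<A$: the exact-value computations only concern $A$ itself, and $k$ is not monotone under passing to subgroups, so $k(B)\geq m/2$ is not excluded. A secondary issue is that $B$ acting on the $m$ cosets of $B\cap M$ need not be primitive, so Theorem \ref{t: main_almost_simple} cannot be applied to $B$ directly even when $B$ happens to be one of the groups treated there.

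The paper avoids both problems with a contradiction argument: assuming $k(B)\geq m/2$, it chooses a core-free subgroup $T$ of $B$ containing $B\cap M$, maximal core-free, and a subgroup $C$ with $T$ maximal in $C$, so that $C$ acts primitively on the cosets of $T$ and inherits $k(C)\geq |C:T|/2$ from Lemma \ref{l: sub conj}. Theorem \ref{t: main_almost_simple} then forces $(C,T)$ into case (2), and those outcomes are eliminated using Lemma \ref{l: kB leq kA} (for Table \ref{tab: final_exceptions} and $A_d$) and Lemma \ref{l: flags} (for the $\PSL_d(q)$ novelty/flag situation), using the hypothesis that $A$ itself is not in case (2). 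Your treatment of the cases covered by Lemma \ref{l: kB leq kA} and your asymptotic argument (where $S$ is large, so only the socle-level bounds $k(B)=O(q^r)$ versus the lower bounds on $m$ matter) are essentially sound, but the inequality $k(B)<m/2$ for Lie-type socles outside the Table needs an argument of the above kind, not the claimed uniform numerical bound.
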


\begin{proof}
We begin with the inequality $k(B)<m/2$. Write $m=|A:M|$ for some core-free maximal subgroup $M$ of $A$. If $B=A$, the claim is true by Theorem \ref{t: main_almost_simple}. Assume, for a contradiction, that there exists $B$ such that $k(B)\geq m/2=|B:B\cap M|/2$. Let $T$ be a core-free subgroup of $B$, maximal with respect to the property that $B\cap M\leq T$ and that $T$ is core-free in $B$ (that is, $T$ does not contain $S$).

Note that the subgroups of $B$ properly containing $T$ must contain $S$. Then choose $C$ such that $T<C\leq B$ and $T$ is maximal in $C$. In particular, $C$ acts primitively on the cosets of $T$, and moreover, by Lemma \ref{l: sub conj},
\[
|B:C|\cdot k(C)\geq k(B)\geq \frac{|B:C||C:T|}{2},
\]
whence $k(C)\geq |C:T|/2$. Therefore we can apply Theorem \ref{t: main_almost_simple}. The first possibility is that $C$ appears in Table \ref{tab: final_exceptions}. By Lemma \ref{l: kB leq kA} and $k(B)\geq m/2$, we deduce $A=\Omega^+_8(2).S_3$. Then by \cite{ATLAS} $m\geq 3600$, which contradicts $k(B)\geq m/2$. By Lemma \ref{l: kB leq kA}, we also see that it cannot be $S\cong A_d$. By Theorem \ref{t: main_almost_simple} and Lemma \ref{l: psl}, the only remaining possibility is that $S\cong \PSL_d(q)$, $C\leq \PGammaL_d(q)$ and $T$ is the stabilizer of a $1$-space or $(d-1)$-space. In particular, $B\cap M$ stabilizes a $1$-space or a $(d-1)$-space.

Assume first that $A\leq \PGammaL_d(q)$. By assumption, $M$ is not the stabilizer of a $1$-space or $(d-1)$-space. Then, there is no other possibility for $M$ (in such a way that $B\cap M$ fixes a $1$-space or $(d-1)$-space), which is a contradiction. Assume finally that $A\nleq \PGammaL_d(q)$. Then the only possibility is that $M$ is the stabilizer of a flag or antiflag. In particular, $m$ is larger than the number of flags in $\F_q^d$, which contradicts Lemma \ref{l: flags}. This final contradiction proves that $k(B)<m/2$ for every $S\leq B\leq A$.

Now we want to show that, for every fixed $\alpha>3/4$, if $S$ is sufficiently large then $k(B)<m^\alpha$ for every $S\leq B\leq A$.

By Theorem \ref{t: main_almost_simple}, we have $k(A)=O(m^{3/4})$. Assume that $k(B)\geq m^\alpha$. We want to show that $S$ has bounded order (in other words, we want to show that, if $S$ is sufficiently large, we get a contradiction). By taking $S$ large, we have $k(B)>k(A)$. Much of the argument of the first part of the proof carries unchanged, except that we have the inequality
\[
|B:C|\cdot k(C)\geq k(B)\geq |B:C|^\alpha \cdot |C:T|^\alpha,
\]
from which $k(C)\geq |C:T|^\alpha \cdot |B:C|^{\alpha-1}$. Note that $|B:C|\leq |\Out(S)|$ and $|C:T|\geq P(S)$. Using \cite[Table 4]{GMPS}, we easily see that $|\Out(S)|=P(S)^{o(1)}$ as $|S|\rightarrow \infty$ (the statement being obvious in case $S\cong A_d$), from which we get that, for every fixed $\beta<\alpha$,
\[
k(C)\geq |C:T|^\alpha \cdot |B:C|^{\alpha-1}>|C:T|^\beta
\]
if $S$ is sufficiently large. In particular we may take $\beta>3/4$, and by Theorem \ref{t: main_almost_simple}, we deduce that $C$ and $|C:T|$ must appear in item (2) of the theorem. Then, the argument that we used in the first part of the proof, together with Lemma \ref{l: flags}, gives a contradiction. 
\end{proof}

\subsection{Proof of Theorem \ref{t: conjugacyclassesprimitive}} We can now prove Theorem \ref{t: conjugacyclassesprimitive}. We will apply many times Lemma \ref{l: sub conj}, usually with no mention. Moreover, we will often use the following theorem from \cite{liebeckpyber}, which we recalled in the introduction.

\begin{theorem}
\label{t: pyber_kP leq 2^r-1}
Let $r\geq 1$ and let $P\leq S_r$. Then, $k(P)\leq 2^{r-1}$.
\end{theorem}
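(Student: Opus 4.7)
The plan is to prove $k(P)\leq 2^{r-1}$ by induction on $r$, the base case $r=1$ being trivial. The key idea is to pass through a sharper auxiliary bound for intransitive groups, which will be the crucial ingredient making the wreath-product reduction close cleanly.

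First, I would establish by a nested induction the following auxiliary statement: if $P\leq S_r$ has $t\geq 2$ orbits on $\{1,\dots,r\}$, then $k(P)\leq 2^{r-t}$. To see this, pick an orbit $\Omega$ of size $m$ and let $K$ be the kernel of the $P$-action on $\Omega$. Then $P/K$ is a transitive subgroup of $\mathrm{Sym}(\Omega)\cong S_m$ with $m<r$, so the outer induction yields $k(P/K)\leq 2^{m-1}$; and $K$ acts faithfully on the remaining $r-m$ points with $t-1$ orbits, so by the outer induction (if $t=2$) or the nested induction on $t$, $k(K)\leq 2^{(r-m)-(t-1)}$. Multiplying via Lemma~\ref{l: sub conj} gives $k(P)\leq 2^{r-t}$.

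With this auxiliary bound in hand, I would split the inductive step according to the action of $P$. If $P$ is intransitive, the bound $k(P)\leq 2^{r-2}$ is already sharper than required. If $P$ is transitive but imprimitive, fix a nontrivial block system with $b\geq 2$ blocks and let $K$ be the kernel of the action on blocks. Then $P/K\leq S_b$ is transitive with $b<r$, so the outer induction gives $k(P/K)\leq 2^{b-1}$; meanwhile $K$ has at least $b$ orbits on $\{1,\dots,r\}$ (each contained in a single block), so the auxiliary bound gives $k(K)\leq 2^{r-b}$. Multiplying once more via Lemma~\ref{l: sub conj} yields $k(P)\leq 2^{r-b}\cdot 2^{b-1}=2^{r-1}$, closing the imprimitive transitive case.

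The hard case will be $P$ primitive, where the wreath-product reduction is no longer available. If $A_r\leq P$ then $k(P)\leq k(S_r)=p(r)$, and $p(r)\leq 2^{r-1}$ follows from the elementary observation that each partition of $r$ is determined by a composition of $r$, of which there are exactly $2^{r-1}$. If $A_r\not\leq P$, Lemma~\ref{l: primitive} gives $|P|<4^r$, which is not yet sharp enough; I would sharpen to a Cameron-type bound of the form $|P|\leq r^{c\log r}$ for primitive groups of degree $r$ not containing $A_r$, which yields $k(P)\leq|P|<2^{r-1}$ for all $r$ above an explicit threshold, and the finitely many remaining small-degree primitive groups are then dispatched by direct inspection. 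The main obstacle is precisely this primitive case: the auxiliary intransitive bound is tailored to make the wreath step go through cleanly, but the primitive step requires external structural information about primitive groups together with a small computation for the low-degree residue.
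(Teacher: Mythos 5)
The paper does not actually prove this statement; it is quoted directly from Liebeck--Pyber \cite{liebeckpyber}. So any self-contained argument is necessarily a different route, and your reduction to the primitive case is sound: the auxiliary bound $k(P)\leq 2^{r-t}$ for a group with $t\geq 2$ orbits goes through (note that the kernel $K$ has \emph{at least} $t-1$ orbits on the complement of $\Omega$, not exactly $t-1$, but this only strengthens the bound), and the block-system step $k(P)\leq k(K)\cdot k(P/K)\leq 2^{r-b}\cdot 2^{b-1}=2^{r-1}$ closes correctly, with the induction on $r$ legitimately feeding the degree-$r$ auxiliary statement into the degree-$r$ imprimitive case because the auxiliary statement only consumes the hypotheses at degrees strictly below $r$.

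The gap is in the primitive case. The bound you propose, $|P|\leq r^{c\log r}$ for every primitive $P\leq S_r$ not containing $A_r$, is false: Cameron's theorem (and Mar\'oti's sharpening of it) must except not only $A_r$ and $S_r$ but an infinite family, namely the groups $P$ with $(A_m)^l\leq P\leq S_m\wr S_l$ acting in product action on $r=\binom{m}{k}^l$ points via the action of $S_m$ on $k$-subsets. These occur in arbitrarily large degree and have order up to roughly $r^{\sqrt{r}}$, so they are not ``finitely many small-degree groups dispatched by inspection,'' and your argument as written does not cover them. They can be handled within your induction --- for instance $k(P)\leq \bigl(2^{m-1}\bigr)^{l}\cdot 2^{l-1}=2^{ml-1}\leq 2^{r-1}$, using the inductive hypothesis at degrees $m<r$ and $l<r$ together with $r=\binom{m}{k}^l\geq m^l\geq ml$ --- but this extra case must be added before the proof is complete. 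For the primitive groups genuinely satisfying $|P|<r^{1+\log_2 r}$, your plan does work: $k(P)\leq |P|<2^{r-1}$ once $r$ exceeds an explicit threshold (around $30$), leaving a finite check.
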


Let $G$ be a primitive permutation group of degree $n$ with nonabelian socle $\Soc(G)\cong S^r$, with $S$ simple.

In the following proof, a permutation group $G$ of degree $n$ in \textit{product action} refers to a group $G\leq A\wr S_r$, where $A$ is almost simple primitive on $m$ points with socle $S$ and $G$ acts on $n=m^r$ points (so we do not include the actions that sometimes are called \textit{holomorph compound} and \textit{compound diagonal}; see \cite{LPS, Praeger} for descriptions and terminology for finite primitive permutation groups).

\begin{proof}[Proof of Theorem \ref{t: conjugacyclassesprimitive}]

\textbf{Assume first} that the action of $G$ is not product action; we want to show $k(G)< n/2$ and $k(G)=O(n^\delta)$ for some absolute $\delta< 1$. We begin with the first inequality.

We have $r\geq 2$ and either $n=|S|^r$, or $r=\ell t$ with $\ell \geq 2$, $t \geq 1$ and $n=|S|^{(\ell-1)t}$. In particular $n \geq |S|^{r/2}$. Furthermore, $G\leq \Aut(S)\wr S_r$. Then, by Lemma \ref{l: sub conj} and Theorem \ref{t: pyber_kP leq 2^r-1}, $k(G)\leq k(G\cap \Aut(S)^r)\cdot 2^{r-1}$. Now $G\cap \Aut(S)^r$ admits a normal series of length $r$ in which every factor is almost simple with socle $S$; therefore, by Theorem \ref{t: pyber_kP leq 2^r-1}, $k(G\cap \Aut(S)^r)\leq f(S)^r$, where $f(S)=\text{max}\{k(A): S\leq A\leq \Aut(S)\}$. We deduce that it is enough to show that
\[
2f(S) < |S|^{1/2}.
\]
By Lemma \ref{l: 4k(A)square}, this is true unless $S\cong A_5, A_6, \PSL_2(7), \PSL_2(11)$. Assume then that we are in one of these cases. If $n=|S|^r$ or $n=|S|^{(\ell-1)t}$ with $\ell \geq 3$, then $n\geq |S|^{2r/3}$, hence by the same argument as above we have $k(G)<n/2$ provided
\[
2f(S) < |S|^{2/3}.
\]
We can check that this is true. Therefore we are reduced to the case in which $S\in \{A_5, A_6, \PSL_2(7), \PSL_2(11)\}$, $r=2t$ and $n=|S|^t$.

Assume first that $t=1$, and let $h(S)$ be the maximum number of conjugacy classes of a primitive group on $|S|$ points with socle $S^2$. We can use \cite{GAP} in order to compute that $h(S) < |S|/2$. 

Next we deal with any $t\geq 1$. We have $G\leq D \wr S_t$, where $D$ has socle $S^2$ and is primitive on $|S|$ points. Then $k(G)\leq k(G\cap D^t)\cdot 2^{t-1}$. Now $G\cap D^t$ admits a normal series of length $t$ in which every factor has socle $S^2$ and is primitive on $|S|$ points; 
in particular $k(G\cap D^t)\leq h(S)^t<(|S|/2)^t$ and therefore $k(G)<|S|^t/2=n/2$, as wanted.

We turn now to the asymptotic statement; namely, $k(G)=O(n^\delta)$ for an absolute $\delta<1$. We assume that $n$ is sufficiently large and we show $k(G)\leq n^\delta$ (which is equivalent up to enlarging $\delta$). We will show in various places that $k(G)\leq n^{\delta'}$ for various $\delta'$. In order to simplify notation, we will always use the same symbol $\delta$ -- one should just take the maximum.  

Assume first that $S$ is sufficiently large. By Lemma \ref{l: 4k(A)square}, we have $f(S) < |S|^{0.35}/2$. Using $n \geq |S|^{r/2}$, we deduce $k(G)<n^{0.7}$.

Assume now that $S$ has bounded order. If $S\not\cong A_5, A_6, \PSL_2(7), \PSL_2(11)$, by Lemma \ref{l: 4k(A)square} we have $2f(S) < |S|^{1/2}$, and in particular
\[
k(G)< (2 \cdot f(S))^r < |S|^{r\delta/2}\leq n^\delta
\]
for some $\delta <1$ absolute (since $|S|$ is bounded).

Assume then that $S\cong A_5, A_6, \PSL_2(7), \PSL_2(11)$. If $n=|S|^r$ or $n=|S|^{(\ell-1)t}$ with $\ell \geq 3$, then $n\geq |S|^{2r/3}$ and, as already observed, $f(S)<|S|^{2/3}/2$; therefore the same argument as above applies. The remaining case is $l=2$ and $r=2t$. We already observed that $2h(S)<|S|$, from which we get 
\[
k(G)< (2 \cdot h(S))^t < |S|^{t\delta}= n^\delta
\]
for some $\delta<1$ absolute.

\textbf{Assume now} that the action of $G$ is product action, and assume that we are not in case (2) of the statement. We want to show $k(G)<n/2$ and $k(G)=O(n^\delta)$ for some $\delta<1$ absolute. 
We begin with the first inequality. We have $G\leq A\wr S_r$, $n=m^r$, and $A$ is an almost simple group with socle $S$ admitting a primitive action on $m$ points, which is not among the possibilities of Theorem \ref{t: main_almost_simple}(2).

Note that $k(G)\leq k(G\cap A^r)\cdot 2^{r-1}$, and $G\cap A^r$ admits a normal series of length $r$ in which each factor is isomorphic to a subgroup $S\leq B\leq A$. By Lemma \ref{l: kB < m/2}, $k(B)<m/2$ for every $S\leq B\leq A$, and therefore $k(G\cap A^r)<(m/2)^r$ and $k(G)< n/2$, as wanted.

The asymptotic statement $k(G)=O(n^\delta)$ for some $\delta<1$ is proved as we did for the case in which $n=|S|^r$ or $n=|S|^{(\ell-1)t}$, dividing the cases $|S|$ sufficiently large and $|S|$ bounded. If $S$ is sufficiently large, by Lemma \ref{l: kB < m/2} we have $k(B)< m^{0.8}/2$ for every $B\leq S\leq A$, and therefore $k(G)< n^{0.8}/2$. If $S$ has bounded order, we only need to use $k(B)<m/2$ for every $S\leq B\leq A$, which holds again in view of Lemma \ref{l: kB < m/2}.

\textbf{Assume now} that we are in case (2)(i) of the statement; we want to show $k(G)<n^{1.31}$. We have $G\leq A\wr S_r$ and $A$ is almost simple acting primitively on $m$ points.

Let us consider first the case in which $A=M_{12}$ acting primitively on $m=12$ points. If $r\geq 4$, \cite{garonzimaroti} tells us that a subgroup of $S_r$ has at most $5^{(r-1)/3}< 5^{r/3}$ conjugacy classes. In particular, using that $k(A)= 15$, we deduce that $k(G)< 15^r \cdot 5^{r/3}$, which we verify to be at most $n^{1.31}$. If $r\leq 3$, we use that a subgroup of $S_r$ has at most $r$ conjugacy classes, so $k(G)\leq 15^r \cdot r$, which is less than $n^{1.31}$ for $r\leq 3$.

Let us consider now all other cases. By Lemma \ref{l: kB leq kA} we have $k(B)\leq k(A)$ for every $S\leq B\leq A$. Then $k(G)\leq k(A)^r\cdot 2^{r-1}< (2k(A))^r$, so we only need to show that $2k(A)\leq m^{1.31}$. This can be checked easily going through all cases in Table \ref{tab: final_exceptions} (but leaving out the case of $M_{12}$ acting on $12$ points).

\textbf{Assume finally} that we are in case (2)(ii) of the statement, and the action of $A$ is isomorphic to an action in (B); in particular $m=(q^d-1)/(q-1)$. We want to show $k(G)<n^{1.9}$.

If $r\geq 4$, by Theorem \ref{c: fg} we have
\[
k(G)\leq (100q^{d-1})^r\cdot 5^{(r-1)/3}< (100\cdot 5^{1/3})^r\cdot n,
\]
hence we are done provided $100\cdot 5^{1/3}\leq m^{0.9}$, that is, $m\geq 303$. If $r\leq 3$, we use $k(G)\leq (100q^{d-1})^r\cdot r$, and we see that $m\geq 303$ is enough also in these cases. 

Therefore we assume that $m<303$; this leaves us with the cases $d=6,7,8$ and $q=2$; or $d=5$ and $q\leq 3$; or $d=4$ and $q\leq 5$, or $d=3$ and $q\leq 16$; or $d=2$ and $q<302$.

We whittle down slightly the possibilities for $d=2$. In the proof of Lemma \ref{l: psl}, we recalled the exact value of $k(\PSL_2(q))$ and $k(\PGL_2(q))$. Using this and $q<302$, it is easy to deduce that $k(A)\leq 8(q+1)=8m$. By the same computation as above, we are done provided $8\cdot 5^{1/3}\leq m^{0.9}$, that is, $m\geq 19$. Therefore if $d=2$ then we may assume that $q\leq 17$.

Now we deal with all the remaining cases (for $d\leq 8$). We only need to show that $k(A)\cdot 5^{1/3}\leq m^{1.9}$, which can be checked with \cite{GAP}.
\end{proof}


\section{Further comments}
\label{sec: final_comments}

\subsection{Theorem \ref{t: conjugacyclassesprimitive}(2)(i)}

In Theorem \ref{t: conjugacyclassesprimitive}(2)(i), we proved $k(G)<n^{1.31}$. Can we get better bounds? Since we have finitely many possibilities for the almost simple primitive group $A$ of degree $m$, we fix $A$ and $m$, and we want to estimate $k(G)$ where $G\leq A\wr S_r$ is primitive, mainly when $r$ is large.

First, we show that it is not always true that $k(G)=o(n)$ as $n\rightarrow \infty$ (and in fact it is not even true that $k(G)=O(n)$).

\begin{lemma}
\label{r: example_M12}
Consider $A=M_{12}$ acting primitively on $12$ points, and consider $G=A\wr C_r$ acting on $n=12^r$ points, where $C_r$ is cyclic of order $r$. If $r$ is large enough, then $k(G)>n^{1.08}$.
\end{lemma}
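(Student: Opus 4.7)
The plan is to lower bound $k(G)$ by counting only those conjugacy classes of $G$ that are contained in the base group $A^r$, and then compare with $n^{1.08} = 12^{1.08r}$.

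First, I would observe that two elements of the base group $A^r \trianglelefteq G$ are $G$-conjugate if and only if they lie in the same orbit of $C_r$ acting (by cyclic shift) on the set of $A^r$-conjugacy classes of $A^r$. Since $A^r$-conjugation is coordinate-wise, this set of classes is canonically $\mathrm{Cl}(A)^r$, where $\mathrm{Cl}(A)$ denotes the set of conjugacy classes of $A$. Hence, by Burnside's lemma applied to the $C_r$-action on $\mathrm{Cl}(A)^r$,
\[
k(G) \geq \frac{1}{r}\sum_{e \mid r} \phi(e)\, k(A)^{r/e} \geq \frac{k(A)^r}{r} = \frac{15^r}{r}.
\]

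Next I would estimate $n^{1.08} = 12^{1.08 r}$ and verify the numerical inequality $12^{1.08} < 15$. Indeed $\log_{10}(12^{1.08}) = 1.08 \cdot \log_{10}(12) \approx 1.08 \cdot 1.0792 \approx 1.166$, so $12^{1.08} \approx 14.65 < 15$. Setting $\alpha := 15/12^{1.08} > 1$, the desired bound $k(G) > n^{1.08}$ becomes
\[
\frac{15^r}{r} > 12^{1.08 r} \iff \alpha^r > r,
\]
which holds for all sufficiently large $r$ because exponential growth beats linear growth.

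There is no real obstacle here: the whole argument is Burnside's lemma plus a numerical comparison. The only point that needs a moment of care is that the $G$-conjugacy action on $\mathrm{Cl}(A)^r$ factors through $A^r \rtimes C_r$, with the base group acting trivially (it permutes representatives within each class) and $C_r$ acting by cyclic shift of coordinates; this is standard for wreath products. Note that the numerical slack $15 > 12^{1.08}$ (with $15 = k(M_{12})$ and $12 = m$) is exactly what drives the statement, and is also the reason the exponent $1.08$ appears rather than something larger.
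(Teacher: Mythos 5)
Your proof is correct and essentially identical to the paper's: both rest on the lower bound $k(G)\geq k(A)^r/r=15^r/r$ together with the numerical observation $12^{1.08}<15$. The only cosmetic difference is that you derive the inequality by counting $C_r$-orbits on $\mathrm{Cl}(A)^r$ via Burnside, whereas the paper simply invokes its Lemma \ref{l: sub conj} ($k(H)/|G:H|\leq k(G)$ with $H=A^r$); both give the same bound.
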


\begin{proof}
We have 
\[
k(G)\geq \frac{k(A)^r}{r}.
\]
Since $k(A)=15$, this is easily seen to be larger than $n^{1.08}$ for $r$ large enough.
\end{proof}

The same argument shows that $k(G)>n^\alpha$ for some absolute $\alpha>1$ whenever $A$ and $m$ in Table \ref{tab: final_exceptions} are such that $k(A)>m$ (but in the table, $A$ and $m$ are replaced by $G$ and $n$). This happens rarely; specifically, when
\[
(A,m)\in \{(M_{12}, 12), (M_{24},24), (\Sp_6(2),28)\}.
\]
Let us consider now the case in which $k(A)\leq m$ (by looking at Table \ref{tab: final_exceptions}, this is equivalent to $k(A)<m$). By Lemma \ref{l: kB leq kA}, we have $k(B)\leq k(A)$ for every subgroup $S=\Soc(A)\leq B\leq A$. We assume that $r\geq 4$, so that by \cite{garonzimaroti} a subgroup of $S_r$ has at most $5^{(r-1)/3}< 5^{r/3}$ conjugacy classes. Then, we have $k(G)< (k(A)5^{1/3})^r$, and whenever $k(A)\cdot 5^{1/3}<m$ we get  $k(G)< n^{\delta}$ for some absolute $\delta<1$. The condition $k(A)\cdot 5^{1/3}<m$ holds in some cases, but not quite in all.

Therefore one should try to change the argument. We make the following conjecture.

\begin{conjecture}
\label{conj_o(n)}
Let $A$ be an almost simple primitive group on $m$ points appearing in Table \ref{tab: final_exceptions}, and assume that $k(A)<m$. Then, for every primitive subgroup $G\leq A\wr S_r$ on $n=m^r$ points, $k(G)=o(m^r)$ as $r\rightarrow \infty$.
\end{conjecture}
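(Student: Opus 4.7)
The plan is to estimate $k(G)$ by exploiting the wreath-product structure of $G$. Write $P \leq S_r$ for the (necessarily transitive) image of $G$ in $S_r$ and $N = G \cap A^r$, so that $G/N \cong P$ and $S^r \leq N \leq A^r$.

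First, I would apply the classical enumeration of conjugacy classes in a wreath product, coset by coset: for each $\sigma \in P$, a $G$-conjugacy class contained in the coset $N\sigma$ is determined by the choice of an ``$A$-cycle product'' per cycle of $\sigma$ on $\{1, \ldots, r\}$, modulo the action of $C_P(\sigma)$ permuting cycles of equal length. Since each projection of $N$ onto a coordinate is an intermediate subgroup $B$ with $S \leq B \leq A$, Lemma \ref{l: kB leq kA} bounds each cycle-product contribution by $k(A)$, yielding
\[
k(G) \leq \sum_{[\sigma] \in P/\sim} k(A)^{c(\sigma)},
\]
where $c(\sigma)$ denotes the number of cycles of $\sigma$ as a permutation of $\{1, \ldots, r\}$.

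Second, the identity $\sigma = 1 \in P$ contributes at most $k(A)^r = (k(A)/m)^r \cdot m^r$, which is $o(m^r)$ by the hypothesis $k(A) < m$. Every non-identity $\sigma \in P$ satisfies $c(\sigma) < r$ and contributes strictly less, so the plan is to show that the remaining terms also sum to $o(m^r)$.

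The main obstacle is controlling $\sum_{[\sigma] \neq 1} k(A)^{c(\sigma)}$ uniformly in transitive $P$. The crude estimate $k(P) \leq 5^{(r-1)/3}$ from Garonzi--Mar\'oti gives only $k(G) \leq 5^{(r-1)/3} k(A)^r$, which is $o(m^r)$ when $5^{1/3} k(A) < m$; this fails at the tightest entries of Table \ref{tab: final_exceptions}, for example $M_{22}.2$ on $22$ points where $k(A) = 21$, and also for transitive $P$ carrying many classes, such as $P = S_4 \wr C_{r/4}$ (which alone has $k(P) \sim 5^{r/4}/r$). In these borderline cases a refinement is needed that exploits the fact that the typical $\sigma \in P$ has $c(\sigma) \ll r$ and thus contributes $k(A)^{c(\sigma)} \ll k(A)^r$; one re-packages the sum through cycle-type statistics, compare the generating function $\prod_i (1 - k(A) t^i)^{-1}$ whose $t^r$-coefficient gives the analogous bound for $P = S_r$. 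The hard part will be producing estimates uniform in $P$ and $r$ matching $o(m^r)$ for all explicit entries of Table \ref{tab: final_exceptions}; a plausible route is to combine this combinatorial analysis with the explicit fusion of $S$-conjugacy classes under $\Aut(S)$, leveraging that the list of relevant $(A, m)$ is short and explicit.
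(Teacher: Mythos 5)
The statement you are addressing is stated in the paper as Conjecture \ref{conj_o(n)}: the authors do not prove it, and in fact they arrive at the conjecture precisely after running the argument you outline and finding that it falls short. Your first two steps reproduce what the paper already does in Section \ref{sec: final_comments}: bound each coordinate's contribution by $k(A)$ via Lemma \ref{l: kB leq kA}, and control the top group by $k(P)\leq 5^{(r-1)/3}$, giving $k(G)<(5^{1/3}k(A))^r$, which suffices only when $5^{1/3}k(A)<m$. You correctly identify that this fails for the tight entries of Table \ref{tab: final_exceptions} (e.g.\ $M_{22}.2$ on $22$ points, where $k(A)=21$ and $m=22$), but the ``refinement via cycle-type statistics'' you invoke to handle those cases is exactly the missing ingredient: you do not produce an estimate uniform in the transitive group $P$, you only assert that one is plausible. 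As written, the proposal is a research plan whose decisive step is left open, so it does not constitute a proof, and the statement remains open in the paper as well.

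There is also a structural gap in your first step. The coset-by-coset enumeration of classes by colourings of the cycles of $\sigma$ modulo $\C_P(\sigma)$ is valid for the full wreath product $A\wr P$ (this is how the paper proves Lemma \ref{l: regular}), but a primitive subgroup $G\leq A\wr S_r$ need not have this form: $N=G\cap A^r$ is in general only a subdirect product of copies of groups $B$ with $S\leq B\leq A$, and $G$ need not split over $N$. The paper is explicit that even a positive answer to Question \ref{q: wreath} (the wreath-product case) would not settle Conjecture \ref{conj_o(n)} for exactly this reason. The bound that is actually available in general is $k(G)\leq k(N)\,k(P)\leq k(A)^r k(P)$ from Lemma \ref{l: sub conj}, which is weaker than your claimed inequality $k(G)\leq \sum_{[\sigma]} k(A)^{c(\sigma)}$; to justify the latter for an arbitrary primitive $G\leq A\wr S_r$ you would need an additional argument (for instance a twisted-conjugation count coset by coset), and that is a second step your proposal leaves unaddressed.
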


In order to address Conjecture \ref{conj_o(n)}, it seems relevant to estimate the number of conjugacy classes in wreath products (although $G$ need not be a full wreath product, which is a complication).

\subsection{Conjugacy classes in wreath products} Let $A\neq 1$ be a finite group, and let $P$ be a transitive permutation group of degree $r$. Throughout, denote $k=k(A)$. Consider the wreath product $G=A\wr P$.
By Theorem \ref{t: pyber_kP leq 2^r-1}, we have $k(G)\leq k^r\cdot 2^{r-1}$. Does a considerably better bound hold? If necessary, we may imagine that $A$ is fixed and $r\rightarrow \infty$. In fact, we ask a question which is independent of the relation between $A$ and $r$.

\begin{question}
\label{q: wreath}
Let $A\neq 1$ be a finite group, let $P\leq S_r$ be transitive, and set $G=A\wr P$. Is $k(G)=O(k^r)$?
\end{question}

We should note that a positive answer to Question \ref{q: wreath} would not necessarily provide a positive answer to Conjecture \ref{conj_o(n)} (since, in Conjecture \ref{conj_o(n)}, $G$ needs not be a wreath product).


The next lemma gives an affirmative answer to Question~\ref{q: wreath} for the case where $P\leq S_r$ is regular. Before proving the lemma, we recall the combinatorial description of the conjugacy classes of $G=A\wr P$, in general: View the $k$ conjugacy classes of $A$ as $k$ distinct colours. Let $\pi_1, \ldots, \pi_t$ be representatives for the conjugacy classes of $P$. For each $i$, colour the cycles of $\pi_i$ in each possible way, and identify two colourings if one is obtained from the other by conjugation in $\C_P(\pi_i)$ (note that $\C_P(\pi_i)$ acts on the cycles of $\pi_i$). In this way we get the conjugacy classes of $G=A\wr P$; these can be thought of as the conjugacy classes of $P$, in which each cycle has a colour, and two colourings are identified as described above.

\begin{lemma}
\label{l: regular}
Let $G=A\wr P$ with $A\neq 1$, $P\leq S_r$ regular, and set $k=k(A)$. Then 
\[
k(G)=\frac{k^r}{r} +O(rk^{r/2}).
\]
\end{lemma}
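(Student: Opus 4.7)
The plan is to exploit the combinatorial description of the conjugacy classes of $G = A \wr P$ recalled just above the statement: a conjugacy class of $G$ is specified by a conjugacy class $[\pi]$ of $P$ together with a $C_P(\pi)$-orbit on the set of $k$-colorings of the cycles of $\pi$, where $k = k(A)$ and $\pi$ is viewed as a permutation of $\{1,\dots,r\}$. Thus
\[
k(G) \;=\; \sum_{[\pi]} \#\{\text{$C_P(\pi)$-orbits on $k$-colorings of the cycles of $\pi$}\}.
\]
The plan is to extract the main term $k^r/r$ from $[\pi]=[1]$ and absorb everything else into the error.

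For the main term, take $\pi = 1$: the cycles are the $r$ singletons and $C_P(1) = P$, so one must count $P$-orbits on $k$-colorings of $\{1,\ldots,r\}$. By Cauchy--Frobenius this equals $\tfrac{1}{r}\sum_{\sigma \in P} k^{c(\sigma)}$, where $c(\sigma)$ denotes the number of cycles of $\sigma$ in its action on $\{1,\ldots,r\}$. Since $P$ is regular, an element $\sigma$ of order $d$ has all cycles of length $d$, so $c(\sigma) = r/d$; in particular $c(1) = r$ and $c(\sigma) \leq r/2$ for every $\sigma \neq 1$. Hence the $\pi=1$ contribution is $k^r/r + O(k^{r/2})$. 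For each nontrivial class $[\pi]$, the number of $C_P(\pi)$-orbits is trivially at most the total number of colorings, namely $k^{c(\pi)} = k^{r/\operatorname{ord}(\pi)} \leq k^{r/2}$. Since $k(P) \leq |P| = r$, summing over the at most $r-1$ nontrivial classes contributes $O(r \cdot k^{r/2})$. Combining these two estimates gives $k(G) = k^r/r + O(r k^{r/2})$.

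The calculation is essentially routine and I do not expect any genuine obstacle; the regularity of $P$ is the key input and is used in exactly two places — to identify $c(\sigma) = r/\operatorname{ord}(\sigma)$ (so that every $\sigma \neq 1$ contributes an exponent at most $r/2$), and through the bound $k(P) \leq r$ that controls how many nontrivial classes can appear in the sum.
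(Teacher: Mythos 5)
Your proposal is correct and follows essentially the same route as the paper: both split $k(G)$ into the contribution of the identity class of $P$ and of the nontrivial classes, use regularity to get $c(\pi)\leq r/2$ for $\pi\neq 1$, and bound the nontrivial contribution crudely by the total number of colourings summed over at most $r$ classes. The only (cosmetic) difference is that you extract the main term $k^r/r$ via Cauchy--Frobenius, whereas the paper counts directly the colourings whose $P$-orbit has full size $r$; both yield the same estimate.
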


\begin{proof}
Assume that $\pi\in P$ has order at least $2$; then $\pi$ has at most $r/2$ cycles. Summing over all nontrivial elements $\pi\in P$, we deduce that the number of colourings of the cycles of all nontrivial elements $\pi\in P$ is at most $rk^{r/2}$.

Now we consider the colourings of the cycles of the identity element $1\in P$. The action of $P=\C_P(1)$ on the cycles can clearly be identified with the action of $P$ on the set $\{1, \ldots, r\}$.

Let $\mathcal C$ be a colouring of $\{1, \ldots, r\}$. The size of the $P$-orbit of $\mathcal C$ is strictly smaller than $r$ if and only if $\mathcal C$ is stabilized by a nontrivial element $\pi\in P$, which implies that $\mathcal C$ has constant colours along the cycles of $\pi$. Therefore, the number of such colourings is at most $rk^{r/2}$. This implies that the number of colourings whose $P$-orbit has size $r$ is at least $k^r-rk^{r/2}$, whence
\[
k(G)=\frac{k^r}{r}+O(rk^{r/2}).
\]
This proves the lemma.
\end{proof}

\subsection{Theorem \ref{t: conjugacyclassesprimitive}(2)(ii)}

In this case we have $G\leq A\wr S_r$ where $A$ is almost simple primitive on $m$ points. Work of Mar\'oti \cite{marotibounding} tells us that $k(G)\leq p(n)$ and this bound is achieved if the action of $A$ is isomorphic to an action in (A). 

Assume instead that the action of $A$ is isomorphic to an action in (B). We have shown that, in this case, $k(G)<n^{1.9}$. This is certainly a long way from being sharp; let us consider what might be possible. 

First, recall that, if $q$ is odd, and if $A=\PGL_2(q)$, then $k(A)=q+2>m=q+1$. If we take for instance $q=5$ then, by the same argument as in Lemma \ref{r: example_M12}, we see that $k(A\wr C_r)>n^{1.08}$ for $r$ sufficiently large. Therefore it is not true in general that $k(G)=O(n)$.

However, in the other direction, observe that, for any $G$ in the case under consideration, the usual bound $k(G)<(100q^{d-1})^r \cdot 2^r$ implies that, for every fixed $\eps>0$,  $k(G)< n^{1+\eps}$ provided $\PSL_d(q)$ is sufficiently large (or, equivalently, provided $m$ is sufficiently large). We are left with the natural question:

\begin{question}
Let $A$ be an almost simple primitive group isomorphic to a group in (B), and assume that $G\leqslant A\wr S_r$ is primitive on $n=m^r$ points. What is the minimum value of $\eps$ such that $k(G)<n^{1+\eps}$?
\end{question}

\bibliography{references}
\bibliographystyle{alpha}

\end{document}